\newtheorem{theorem}{Theorem}
\newtheorem{corollary}{Corollary}
\newtheorem{proposition}{Proposition}
\newtheorem{fact}{Fact}
\newcommand{\bz}{\mathbf 0}
\newcommand{\R}{\mathbb R}
\newcommand{\C}{\mathbb C}
\newcommand{\T}{\mathbb T}
\newcommand{\ignore}[1]{}
\begin{document}
\title{On the Estimation Performance and Convergence Rate of the Generalized Power Method for Phase Synchronization}
\author{
Huikang Liu\thanks{Department of Systems Engineering and Engineering Management, The Chinese University of Hong Kong,
Shatin, N. T., Hong Kong. E-mail: {\tt hkliu@se.cuhk.edu.hk}}\and
Man-Chung Yue\thanks{Department of Systems Engineering and Engineering Management, The Chinese University of Hong Kong,
Shatin, N. T., Hong Kong. E-mail: {\tt mcyue@se.cuhk.edu.hk}}\and
Anthony Man-Cho So\thanks{Department of Systems Engineering and Engineering Management, and, by courtesy, CUHK-BGI Innovation Institute of Trans-omics, The Chinese University of Hong Kong, Shatin, N. T., Hong Kong. E-mail:
{\tt manchoso@se.cuhk.edu.hk}}
}
\date{\today}
\maketitle
\begin{abstract}
  An estimation problem of fundamental interest is that of phase (or angular) synchronization, in which the goal is to recover a collection of phases (or angles) using noisy measurements of relative phases (or angle offsets).  It is known that in the Gaussian noise setting, the maximum likelihood estimator (MLE) has an expected squared $\ell_2$-estimation error that is on the same order as the Cram\'er-Rao lower bound.  Moreover, even though the MLE is an optimal solution to a non-convex quadratic optimization problem, it can be found with high probability using semidefinite programming (SDP), provided that the noise power is not too large.  In this paper, we study the estimation and convergence performance of a recently-proposed low-complexity alternative to the SDP-based approach, namely, the generalized power method (GPM).  Our contribution is twofold.  First, we bound the rate at which the estimation error decreases in each iteration of the GPM and use this bound to show that all iterates---not just the MLE---achieve an estimation error that is on the same order as the Cram\'er-Rao bound.  Our result holds under the least restrictive assumption on the noise power and gives the best provable bound on the estimation error known to date.  It also implies that one can terminate the GPM at any iteration and still obtain an estimator that has a theoretical guarantee on its estimation error. Second, we show that under the same assumption on the noise power as that for the SDP-based method, the GPM will converge to the MLE at a linear rate with high probability.  This answers a question raised in~\cite{boumal2016nonconvex} and shows that the GPM is competitive in terms of both theoretical guarantees and numerical efficiency with the SDP-based method.  At the heart of our convergence rate analysis is a new error bound for the non-convex quadratic optimization formulation of the phase synchronization problem, which could be of independent interest.  As a by-product, we give an alternative proof of a result in~\cite{boumal2016nonconvex}, which asserts that every second-order critical point of the aforementioned non-convex quadratic optimization formulation is globally optimal in a certain noise regime.

\end{abstract}

\section{Introduction}\label{sec:intro}
The problem of phase synchronization is concerned with the estimation of a collection of phases\footnote{Throughout the paper, the term ``phase'' refers to a complex number with unit modulus.}
based on noisy measurements of the relative phases. Formally, let $z^\star\in \mathbb{T}^n=\{w\in\C^n: |w_1|=\cdots=|w_n|=1\}$ be an unknown phase vector.  Given noisy measurements of the form
\begin{equation} \label{eq:measure}
C_{j\ell} = z_j^\star \bar{z}_\ell^\star +\Delta_{j\ell} \quad\mbox{for } 1\le j<\ell\le n,
\end{equation}
where $\overline{(\cdot)}$ denotes the complex conjugate and $\Delta_{j\ell}\in\C$ is the noise in the measurement of the relative phase $z_j^\star\bar{z}_\ell^\star$, our goal is to find an estimate $\hat{z}\in\T^n$ of $z^\star\in\T^n$ that best fits those measurements in the least-squares sense. In other words, we are interested in solving the following optimization problem:
\begin{equation} \label{eq:AS-ls}
\hat{z} \in \arg\min_{z\in\T^n} \sum_{1\le j<\ell\le n} \left| C_{j\ell} - z_j\bar{z}_\ell \right|^2.
\end{equation}
Despite its simple description, the phase synchronization problem arises in a number of applications, including clock synchronization in wireless networks~\cite{giridhar2006distributed}, signal reconstruction from phaseless measurements \cite{ABFM14,viswanathan2015fast}, and ranking of items based on noisy pairwise comparisons \cite{C16}.  For further discussions on the applications of phase synchronization, we refer the reader to~\cite{BBS16} and the references therein.

Although Problem~\eqref{eq:AS-ls} may seem to involve an objective function that is quartic in the decision variable $z$, it can actually be reformulated as a complex quadratic optimization problem with unit-modulus constraints.  Indeed, by writing the measurements~\eqref{eq:measure} more compactly as $C=(z^\star)(z^\star)^H+\Delta$,
where $(\cdot)^H$ denotes the Hermitian transpose and $\Delta$ is a Hermitian matrix whose diagonal entries are zero and the above-diagonal entries are given by $\{\Delta_{j\ell}:1\le j<\ell\le n\}$, and by noting that $|z_j\bar{z}_\ell|^2=1$ for $1\le j<\ell\le n$ because $z\in\T^n$, we see that Problem~\eqref{eq:AS-ls} is equivalent to
\begin{equation} \label{opt:QP} \tag{QP}
\hat{z} \in \arg\max_{z\in\T^n} \left\{ f(z)=z^HCz \right\}.
\end{equation}

As it turns out, Problem~\eqref{opt:QP} is NP-hard in general~\cite{TO98}.  Over the past two decades or so, many different approaches to tackling Problem~\eqref{opt:QP} have been proposed.  One popular approach is to apply the \emph{semidefinite relaxation} (SDR) technique, which will lead to a polynomial-time algorithm for computing a feasible but typically sub-optimal solution to Problem~\eqref{opt:QP} (see~\cite{LMS+10} for an overview of the technique).  Interestingly, the \emph{approximation accuracy} of the SDR solution, measured by the relative gap between the objective value of the SDR solution and the optimal value of Problem~\eqref{opt:QP}, can be established under various assumptions on $C$~\cite{SZY07,S10a}.  However, since our goal is to estimate the unknown phase vector $z^\star$, a more relevant measure of the quality of the SDR solution is its \emph{estimation error}, which intuitively can be defined as the distance between the SDR solution and the target phase vector $z^\star$.  Unfortunately, the aforementioned approximation accuracy results do not automatically translate into estimation error results. In an attempt to fill this gap, Bandeira et al.~\cite{BBS16} considered a Gaussian noise model and studied the estimation error of the SDR solution.  Specifically, suppose that the measurement noise takes the form $\Delta=\sigma W$, where $W$ is a Wigner matrix (i.e., a Hermitian random matrix whose diagonal entries are zero and the above-diagonal entries are i.i.d.~standard complex normal random variables) and $\sigma^2>0$ is the noise power.  It is shown in~\cite{BBS16} that if $\sigma=O(n^{1/4})$, then with high probability the standard SDR of Problem~\eqref{opt:QP} has a unique optimal solution that is of rank one; i.e., the SDR is tight.  This implies that a global maximizer $\hat{z}$ of Problem~\eqref{opt:QP}, which in this case is also a maximum likelihood estimator (MLE) of the target phase vector of $z^\star$, can be found in polynomial time.  Moreover, the expected squared $\ell_2$-estimation error of $\hat{z}$ is bounded above by $O(\sigma^2)$.  This matches (up to constants) the Cram\'er-Rao lower bound developed in~\cite{boumal2014cramer}, which applies to any unbiased estimator of $z^\star$.  As an aside, although large instances of the standard SDR of Problem~\eqref{opt:QP} may be costly to solve using interior-point methods, they can be solved quite efficiently in practice by numerical methods that exploit structure; see, e.g.,~\cite{WGY10,WMS11,WGS12}.  However, unlike interior-point methods, which are known to converge in polynomial time, most of these methods do not have convergence rate guarantees.

Besides the aforementioned SDR-based method, one can also employ the \emph{generalized power method} (GPM)~\cite{journee2010generalized} (see also~\cite{luss2013conditional}) to tackle Problem~\eqref{opt:QP}. When specialized to Problem~\eqref{opt:QP}, the GPM can be viewed as a gradient method on the manifold $\T^n$ and is much easier to implement than the SDR-based method. In a very recent work, Boumal~\cite{boumal2016nonconvex} analyzed the convergence behavior of the GPM under the same Gaussian noise model used in~\cite{BBS16} and showed that if $\sigma=O(n^{1/6})$, then with high probability the GPM will converge to a global maximizer of Problem~\eqref{opt:QP} when initialized by the eigenvector method in~\cite{Sing11}.  This result is significant, since in general the GPM may not even converge to a single point, let alone to a global optimizer of the problem at hand.  However, it does not give the rate at which the GPM converges to the global maximizer.  Moreover, compared with the result obtained for the SDR approach in~\cite{BBS16}, we see that the above result holds only in the more restrictive noise regime of $\sigma=O(n^{1/6})$.  Although numerical experiments in~\cite{BBS16,boumal2016nonconvex} indicate that both the SDR-based method and the GPM can find a global maximizer of Problem~\eqref{opt:QP} even when $\sigma$ is on the order of $n^{1/2}/{\rm polylog}(n)$, proving this rigorously remains an elusive task. 

Motivated by the preceding discussion, our goal in this paper is to gain a deeper understanding of the GPM when it is applied to Problem~\eqref{opt:QP} under the same Gaussian noise model used in~\cite{BBS16,boumal2016nonconvex}.  The starting point of our investigation is the following curious facts: Using Proposition~\ref{lem:q_norm}, which first appears in an earlier version of this paper, Boumal~\cite{boumal2016nonconvex} showed that modulo constants, the expected squared $\ell_2$-estimation error of the initial iterate of the GPM, viz.~the one produced by the eigenvector method in~\cite{Sing11}, already matches the Cram\'er-Rao bound.  Moreover, in the noise regime $\sigma=O(n^{1/6})$, we know by the results in~\cite{BBS16,boumal2016nonconvex} that the same is true for the limit point of the sequence of iterates generated by the GPM, as it is a global maximizer of Problem~\eqref{opt:QP}.  In view of these facts, it is natural to ask whether the intermediate iterates generated by the GPM also achieve an estimation error that is on the same order as the Cram\'er-Rao bound, and if so, whether the GPM actually reduces the estimation error in each iteration.  Our first contribution is to resolve both of these questions in the affirmative and to bound the rate at which the estimation error decreases in each iteration.  Specifically, we show that even at the noise level $\sigma=O(n^{1/2})$, the expected squared $\ell_2$-estimation errors of the iterates do not exceed $(c_1+c_2\tau^k)\sigma^2$, where $c_1,c_2>0$, $\tau\in(0,1)$ are some explicitly given constants and $k$ is the iteration counter; see the discussion after Corollary~\ref{cor:1}.  An interesting aspect of this result is that it holds regardless of whether the iterates converge or not (recall that the convergence result in~\cite{boumal2016nonconvex} holds only for noise level up to $O(n^{1/6})$).  Thus, from a statistical estimation viewpoint, one can terminate the GPM at any iteration and still obtain an estimator whose estimation error is on the same order as the Cram\'er-Rao bound.  Moreover, the leading constant in the estimation error becomes smaller as one runs more iterations of the GPM.  This explains in part the numerical observation in~\cite{boumal2016nonconvex} that the GPM can often return a good estimate of $z^\star$ even when the noise level is close to $O(n^{1/2})$. To the best of our knowledge, the bound we obtained on the $\ell_2$-estimation error of any accumulation point generated by the GPM holds under the least restrictive noise level requirement and is the best known to date in the Gaussian noise setting.

Next, we study the convergence behavior of the GPM when it is applied to Problem~\eqref{opt:QP}.  Our second contribution is to show that in the Gaussian noise setting, if $\sigma=O(n^{1/4})$ and the GPM is initialized by the eigenvector method, then with high probability the sequence of iterates generated by the GPM will converge linearly to a global maximizer of Problem~\eqref{opt:QP} (which is an MLE of $z^\star$); see Corollary~\ref{cor:global-conv}.  The significance of this result is twofold.  First, compared with the result in~\cite{boumal2016nonconvex}, the noise level requirement for the convergence of the GPM is relaxed from $O(n^{1/6})$ to $O(n^{1/4})$, thus matching the noise level requirement for the tightness of the SDR-based method.  Second, our result answers a question raised in~\cite{boumal2016nonconvex} concerning the convergence rate of the GPM and contributes to the growing literature on the design and analysis of fast algorithms for structured non-convex optimization problems (see, e.g.,~\cite{SQW15} and the references therein for an overview).  Key to our analysis is a new \emph{error bound} for Problem~\eqref{opt:QP}, which provides a computable estimate of the distance between any given point on $\T^n$ and the set of second-order critical points (which includes the global maximizers) of Problem~\eqref{opt:QP}; see Propositions~\ref{prop:err-bd} and~\ref{prop:EB-2crit}.  As a by-product, we show that every second-order critical point of Problem~\eqref{opt:QP} is still a global maximizer under a slightly less restrictive noise level requirement than~\cite{boumal2016nonconvex}; see the discussion after the proof of Proposition~\ref{prop:EB-2crit}.  We remark that error bounds have long played an important role in the convergence rate analysis of iterative methods; see, e.g.,~\cite{HZSL13,SZ15,ZS15,ZZS15,LP16,LWS16,Z16} for some recent developments.  However, most of the error bounds in the cited works are for convex optimization problems.  By contrast, our error bound is developed for the non-convex problem~\eqref{opt:QP}, which could be of independent interest.


We end this section by introducing the notations needed.
Let $\mathbf{1}$ denote the vector of all ones and $\mathbb{H}^n$ denote the set of $n\times n$ Hermitian matrices. For a complex vector $v\in\mathbb{C}^n$, let $\text{Diag}(v)$ denote the diagonal matrix whose diagonal elements are given by the entries of $v$, $|v|$ denote the vector of entry-wise moduli of $v$, and $\tfrac{v}{|v|}$ denote the vector of entry-wise normalizations of $v$; i.e.,
\begin{equation*}
({\rm Diag}(v))_{jj} = v_j, \quad |v|_j=|v_j|,\quad \left(\frac{v}{|v|}\right)_j=\left\{
\begin{array}{c@{\quad}l}
\tfrac{v_j}{|v_j|} &\text{if } v_j\neq 0,\\
\noalign{\smallskip}
0 & \text{otherwise}.
\end{array}
\right.
\end{equation*}
For a complex matrix $M\in\mathbb{C}^{n\times n}$, let $\text{diag}(M)$ denote the vector whose entries are the diagonal elements of $M$, $\|M\|_{\text{op}}$ denote its operator norm, and $\|M\|_F$ denote its Frobenius norm.

Since the measurements $\{C_{j\ell}:1\le j<\ell\le n\}$ in~\eqref{eq:measure} are invariant under multiplication of a common phase to the target phase vector $z^\star$, we can only identify $z^\star$ up to a global phase.  This motivates us to define the $\ell_q$-distance (where $q\in[1,\infty]$) between two phase vectors $w,z\in\mathbb{T}^n$ by
$$d_q(w,z)=\min_{\theta\in[0,2\pi)}\|w-e^{i\theta}z\|_q. $$

\section{Preliminaries}

In this section, we review the GPM for solving Problem~\eqref{opt:QP} and collect some basic facts that will be used in our subsequent analysis.

The GPM is an iterative method that was introduced in~\cite{journee2010generalized} for maximizing a convex function over a compact set.  In each iteration of the GPM, an affine minorant of the objective function at the current iterate is maximized over the feasible set to obtain the next iterate.  When specialized to Problem~\eqref{opt:QP}, the maximization performed in each iteration admits a closed-form solution, and the GPM takes the following form:

\begin{algorithm}[H]
\caption{Generalized Power Method for Problem~\eqref{opt:QP}} \label{alg:GP}
\begin{algorithmic}[1]
\STATE input: objective matrix $C\in\mathbb{H}^n$, step size $\alpha>0$, initial point $z^0\in\T^n$
\FOR {$k=0,1,\dots$}
\IF{termination criterion is met}
\STATE return $z^k$
\ELSE
\STATE $w^k \gets \left(I+\tfrac{\alpha}{n} C\right)z^k$
\STATE $z^{k+1} \gets \tfrac{w^k}{|w^k|}$
\ENDIF
\ENDFOR
\end{algorithmic}
\end{algorithm}

Algorithm~\ref{alg:GP} can be viewed as a projected gradient method (see lines 6-7), though it is not necessarily a Riemannian gradient method on the manifold $\T^n$; see~\cite[Remark 1]{boumal2016nonconvex}.  Due to the non-convexity of Problem~\eqref{opt:QP}, given an arbitrary initial point, Algorithm~\ref{alg:GP} may not converge to any useful point (if it converges at all).  To tackle this issue, Boumal~\cite{boumal2016nonconvex} proposed to use the \emph{eigenvector estimator} $v_C\in\T^n$ (cf.~\cite{Sing11}) to initialize Algorithm~\ref{alg:GP}.  Specifically, let $u\in\C^n$ be a leading eigenvector of $C\in\mathbb{H}^n$ and $a\in\C^n$ be any vector satisfying $a^Hu\not=0$.  Then, the vector $v_C$ is defined by
\begin{equation}\label{eq:init}
(v_C)_j = \left\{
\begin{array}{c@{\quad}l}
\tfrac{u_j}{|u_j|} & \text{if}\ u_j\neq 0,\\
\noalign{\smallskip}
\tfrac{a^Hu}{|a^Hu|} & \text{otherwise}
\end{array}
\right. \quad\mbox{for } j=1,\ldots,n.
\end{equation}
As shown in~\cite{boumal2016nonconvex}, the advantage of initializing Algorithm~\ref{alg:GP} with $z^0=v_C$ is twofold.  First, the vector $v_C$ is close to the target phase vector $z^\star$ in the following sense:
\begin{fact} (\cite[Lemma 6]{boumal2016nonconvex}) \label{lem:init}
Let $v_C\in\T^n$ be given by \eqref{eq:init}. Then, we have
\begin{equation*}
d_2(v_C,z^\star) \leq \frac{8\|\Delta\|_{\rm op}}{\sqrt{n}}.
\end{equation*}
\end{fact}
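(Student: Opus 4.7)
The plan is to split the bound into two independent pieces: (i) a Davis--Kahan-type eigenvector perturbation that compares the leading eigenvector $u$ of $C$ to $z^\star/\sqrt{n}$ in $\ell_2$ after the best global phase alignment, and (ii) an entry-wise ``normalization lemma'' that transfers this closeness to the unit-modulus vector $v_C$ obtained by per-entry normalization of $u$.

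For step (i), note that the unperturbed matrix $z^\star (z^\star)^H$ has top eigenvalue $n$ (with eigenvector $z^\star/\sqrt{n}$) and all other eigenvalues equal to zero, so the eigengap is exactly $n$. The standard Davis--Kahan $\sin\Theta$ bound then yields $\sin\Theta(u,z^\star/\sqrt{n}) \leq 2\|\Delta\|_{\rm op}/n$. After choosing the global phase of $u$ so that $\langle u, z^\star/\sqrt{n}\rangle \geq 0$, the identity $\|u - z^\star/\sqrt{n}\|_2 = 2\sin(\Theta/2)$ combined with $\sin(\Theta/2) \leq \sin\Theta$ (which is valid when $\Theta \leq 2\pi/3$) gives
$$\min_{\phi}\left\|u - e^{i\phi}\tfrac{z^\star}{\sqrt n}\right\|_2 \;\leq\; \frac{4\|\Delta\|_{\rm op}}{n}.$$
The regime $\Theta > 2\pi/3$ corresponds to $\|\Delta\|_{\rm op} \gtrsim n$, in which case $d_2(v_C,z^\star) \leq 2\sqrt n$ already satisfies the claimed bound trivially.

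For step (ii), the key scalar fact is that for every $a\in\C$ and every phase $b\in\C$ with $|b|=1$,
$$\left|\tfrac{a}{|a|} - b\right| \;\leq\; 2\,|a-b|,$$
which follows from $\big|1-|a|\big| = \big||b|-|a|\big| \leq |a-b|$ and the triangle inequality $\big|a/|a| - b\big| \leq \big|a/|a| - a\big| + |a-b|$. I would then apply this entry-wise with $a = \sqrt{n}\,u_j$ and $b = e^{i\phi}z^\star_j$, where $\phi$ is the optimal phase from step (i). Summing the squares and invoking the phase-equivariance of the normalization map gives
$$d_2(v_C, z^\star)^2 \;\leq\; \sum_{j=1}^{n} \left|\tfrac{u_j}{|u_j|} - e^{i\phi}z^\star_j\right|^2 \;\leq\; 4n\left\|u - e^{i\phi}\tfrac{z^\star}{\sqrt n}\right\|_2^2,$$
and combining with step (i) yields the claimed bound $8\|\Delta\|_{\rm op}/\sqrt n$.

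The main obstacle is the definitional subtlety at entries where $u_j = 0$, since the scalar inequality above is stated for $a\neq 0$ (otherwise $a/|a|$ is not literally defined). Here the tailored definition of $v_C$ in \eqref{eq:init} rescues the argument: the fallback value $(v_C)_j = a^H u/|a^H u|$ is still a unit-modulus number, so $|(v_C)_j - e^{i\phi}z^\star_j| \leq 2 = 2\,|0 - e^{i\phi}z^\star_j| = 2|\sqrt{n}u_j - e^{i\phi}z^\star_j|$, meaning the entry-wise inequality survives at these indices with the same constant. Apart from this bookkeeping, the proof is a clean composition of a perturbation bound and an elementary scalar inequality, with no heavy computation required.
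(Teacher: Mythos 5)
The paper does not prove this statement itself --- it is recorded as a Fact and attributed to Lemma 6 of~\cite{boumal2016nonconvex}, so there is no in-paper proof to compare against. Your argument is correct and recovers the constant $8$ exactly. Its two-step structure (eigenvector perturbation, then per-entry normalization) mirrors the route the cited proof must take. The normalization step is precisely the scalar case of the paper's Proposition~\ref{lem:q_norm}, which the introduction explicitly says is the ingredient Boumal used; your derivation of the scalar inequality via $\bigl|1-|a|\bigr|=\bigl||b|-|a|\bigr|\le|a-b|$ plus one triangle inequality is in fact cleaner than the case analysis given in the paper's proof of Proposition~\ref{lem:q_norm}, and your handling of the $u_j=0$ fallback is the right bookkeeping. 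Where you genuinely diverge is the perturbation step: the natural route behind the cited Lemma 6 is to apply Fact~\ref{lem:1} (Lemma 4.1 of~\cite{BBS16}) to $\sqrt{n}\,u$, using $f(\sqrt{n}u)=n\lambda_1(C)\ge f(z^\star)$ to get $d_2(\sqrt{n}u,z^\star)\le 4\|\Delta\|_{\rm op}/\sqrt{n}$ directly from the quadratic-form structure and with no smallness assumption on $\|\Delta\|_{\rm op}$, whereas you invoke Davis--Kahan with eigengap $n$, which needs $\|\Delta\|_{\rm op}\lesssim n$ for the constant and then a separate trivial-case argument (which you correctly supply). Both routes land on the same intermediate bound before normalization, so this is a legitimately different but equally valid decomposition. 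One small correction: the angle $\Theta$ between the one-dimensional spans satisfies $\cos\Theta=|u^Hz^\star|/\sqrt{n}\ge 0$, so $\Theta\in[0,\pi/2]$ always; the inequality $\sin(\Theta/2)\le\sin\Theta$ therefore holds unconditionally on the relevant range, and the regime $\Theta>2\pi/3$ you guard against simply cannot occur.
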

\noindent Second, under some mild assumptions on the measurement noise $\Delta$ and step size $\alpha$, the iterates generated by Algorithm~\ref{alg:GP} will converge to a global maximizer of Problem~\eqref{opt:QP}:
\begin{fact} \label{fact:global-convergence} (\cite[Theorem 3]{boumal2016nonconvex})
Suppose that (i) the measurement noise $\Delta$ satisfies $\|\Delta\|_{\rm op}=O(n^{2/3})$ and $\|\Delta z^\star\|_\infty = O(n^{2/3}\sqrt{\log n})$, (ii) the step size $\alpha$ satisfies $\alpha \le \tfrac{n}{\|\Delta\|_{\rm op}}$, and (iii) the initial point $z^0$ is given by $z^0=v_C$.  Then, the iterates generated by Algorithm~\ref{alg:GP} will converge to a global maximizer of Problem~\eqref{opt:QP}.
\end{fact}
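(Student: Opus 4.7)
The plan is to combine a sufficient ascent property of Algorithm~\ref{alg:GP} with a landscape analysis of the objective $f(z)=z^H C z$ in a neighborhood of $z^\star$. First I would verify that the update $z \mapsto \tfrac{(I+\tfrac{\alpha}{n}C)z}{|(I+\tfrac{\alpha}{n}C)z|}$ is a monotone ascent step for $f$ whenever $I+\tfrac{\alpha}{n}C$ is positive semidefinite, which is guaranteed by the step-size condition $\alpha \le n/\|\Delta\|_{\rm op}$ since $C=z^\star(z^\star)^H+\Delta$ has smallest eigenvalue at least $-\|\Delta\|_{\rm op}$. A short calculation using the identity $|z_j|=|z^\star_j|=1$ should in fact give a sufficient ascent estimate of the form $f(z^{k+1})-f(z^k)\ge \gamma\,\|z^{k+1}-z^k\|_2^2$ for some $\gamma>0$ depending on $\alpha$ and $\|\Delta\|_{\rm op}$, which together with the boundedness of $f$ on $\T^n$ will yield $\|z^{k+1}-z^k\|_2 \to 0$.

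Next I would set up a basin-of-attraction argument around $z^\star$. By Fact~\ref{lem:init}, the initialization satisfies $d_2(v_C,z^\star)=O(\|\Delta\|_{\rm op}/\sqrt{n})=O(n^{1/6})$ under the hypothesis $\|\Delta\|_{\rm op}=O(n^{2/3})$. The goal is to show inductively that $d_2(z^{k+1},z^\star)\le d_2(z^k,z^\star)$, or at worst that the iterates stay within a controlled neighborhood of $z^\star$. This is where the auxiliary noise bound $\|\Delta z^\star\|_\infty = O(n^{2/3}\sqrt{\log n})$ enters: after fixing the global phase optimally at iteration $k$, the $j$-th coordinate of $w^k = (I+\tfrac{\alpha}{n}C)z^k$ can be written as $z^\star_j(1+\tfrac{\alpha}{n}(n-1)) + \tfrac{\alpha}{n}[\Delta z^\star]_j + \text{perturbation}$, and the smallness of both $\|\Delta z^\star\|_\infty$ and $\|z^k-z^\star\|_2$ should force the argument of $w^k_j$ to be close to the argument of $z^\star_j$, so that projecting onto $\T^n$ does not blow up the distance.

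The third ingredient is a local characterization of the fixed points. The iteration's fixed points are precisely the points $z\in\T^n$ with $Cz = \text{Diag}(|Cz|)z$, i.e., the first-order critical points of~\eqref{opt:QP}. Within the neighborhood established in the previous step, I would show that $f$ has a unique critical point and that this point is in fact the global maximizer of~\eqref{opt:QP}. A clean way is to argue that, after parametrizing $z_j=e^{i\theta_j}z^\star_j$, the function $f$ is strongly concave in $\theta$ on a small ball around the origin whenever the noise perturbation of the Hessian of $-f$ is dominated by the $n I$ contribution coming from $z^\star(z^\star)^H$. Combining this with the bounded-trajectory and vanishing-step properties from the first two steps gives convergence of the entire sequence to that unique local-and-global maximizer.

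The main obstacle I expect is the third step: controlling critical points of a non-convex function on a manifold is delicate, and making the basin large enough to contain $v_C$ (which is only $O(n^{1/6})$ away from $z^\star$ in $\ell_2$) while still enjoying strong-concavity-type bounds requires carefully tracking how $\|\Delta\|_{\rm op}$ and $\|\Delta z^\star\|_\infty$ enter the Hessian perturbation. The coordinate-wise bound $\|\Delta z^\star\|_\infty=O(n^{2/3}\sqrt{\log n})$ is what allows us to handle the boundary case where $\|\Delta\|_{\rm op}$ is pushed all the way to $O(n^{2/3})$; losing the logarithmic factor or replacing the $\ell_\infty$ bound by an $\ell_2$ bound would weaken the tolerated noise level back toward the $O(n^{1/2})$ regime treated separately in the paper.
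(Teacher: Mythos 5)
You're sketching a proof for a result that the paper itself does not prove: Fact~\ref{fact:global-convergence} is imported verbatim from Boumal~\cite{boumal2016nonconvex} as a known result, and the paper's own contribution is Theorem~\ref{thm:convergence-rate}, a \emph{strengthening} of it (weaker noise assumption and an explicit linear rate). So the relevant comparison is with the proof of Theorem~\ref{thm:convergence-rate}, and your route is genuinely different. You follow the classical dynamical-systems template of Boumal's original argument: (i) sufficient ascent gives $\|z^{k+1}-z^k\|_2\to0$, (ii) an inductive trapping argument keeps the iterates in a neighborhood of $z^\star$, and (iii) a landscape argument shows that the only fixed point in that neighborhood is the global maximizer, whence the whole sequence converges. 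The paper instead establishes an \emph{error bound} (Proposition~\ref{prop:err-bd}) relating $d_2(z,\hat z)$ to the computable residual $\rho(z)$, combines it with the sufficient-ascent, cost-to-go, and safeguard inequalities of Proposition~\ref{prop:alg-prop}, and derives a $Q$-linear contraction of the optimality gap. The error-bound route is stronger: it gives a rate, needs no asymptotic ``accumulation point'' reasoning, and pushes the noise tolerance to $\|\Delta\|_{\rm op}=O(n^{3/4})$; your route, as with Boumal's, gives convergence only, and its reliance on second-order (Hessian) control is what caps the tolerance at $O(n^{2/3})$.

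Two points in your sketch need repair. First, in step~1 you ask that $I+\tfrac{\alpha}{n}C$ be positive semidefinite and then claim a sufficient ascent constant $\gamma>0$; for $\gamma$ to be strictly positive you actually need $\lambda_{\min}(\Delta+\tfrac{n}{\alpha}I)>0$, i.e.\ $\alpha<n/\|\Delta\|_{\rm op}$ strictly, whereas the Fact allows equality. Also the $j$-th coordinate you wrote should read $z^\star_j(1+\alpha)+\tfrac{\alpha}{n}(\Delta z^\star)_j+\dots$, not $z^\star_j(1+\tfrac{\alpha}{n}(n-1))$, since $(z^\star(z^\star)^H z^\star)_j=n z^\star_j$. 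Second, and more substantively, in step~3 you claim $f$ is ``strongly concave in $\theta$'' after the parametrization $z_j=e^{i\theta_j}z_j^\star$; this is false as stated because $f$ is invariant under the global shift $\theta\mapsto\theta+c\mathbf 1$, so the Hessian of $-f$ in $\theta$-coordinates always has a null direction along $\mathbf 1$. You must work modulo this one-dimensional invariance (for instance on the orthogonal complement of $\mathbf 1$, or with the quotient metric used implicitly in $d_2$), and then prove positive definiteness there on a ball of radius comparable to $d_2(v_C,z^\star)=O(n^{1/6})$. That is the technical crux and it is not actually carried out in your sketch---it is where the bounds on $\|\Delta\|_{\rm op}$ and $\|\Delta z^\star\|_\infty$ must be fed into a uniform Hessian estimate, and it is the step that Proposition~\ref{lem:q_norm} plus the inductive $d_2$-control of Theorem~\ref{thm:2-norm} would be needed to support.
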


It should be noted that even allowing for the multiplication of a common phase, a global maximizer $\hat{z}$ of Problem~\eqref{opt:QP} may not equal to the target phase vector $z^\star$.  Thus, an immediate question is whether global maximizers of Problem~\eqref{opt:QP} are close to $z^\star$.  The following result shows that the answer is affirmative:
\begin{fact} (\cite[Lemma 4.1]{BBS16}) \label{lem:1}
Suppose that $\hat{z}\in\C^n$ satisfies $\|\hat{z}\|_2^2=n$ and $f(z^\star) \leq f(\hat{z})$ (e.g., if $\hat{z}$ is a global maximizer of Problem~\eqref{opt:QP}).  Then, we have
$$
d_2(\hat{z},z^\star) = \sqrt{2\left( n-|\hat{z}^Hz^\star| \right)}\leq \frac{4 \|\Delta\|_{\rm op}}{\sqrt{n}}.
$$
\end{fact}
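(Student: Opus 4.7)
The plan is to establish the equality and the inequality in the statement separately. The equality $d_2(\hat z, z^\star) = \sqrt{2(n - |\hat z^H z^\star|)}$ follows by direct expansion: using $\|\hat z\|_2^2 = n$ (by hypothesis) and $\|z^\star\|_2^2 = n$ (since $z^\star\in\T^n$), one has $\|\hat z - e^{i\theta}z^\star\|_2^2 = 2n - 2\,\mathrm{Re}(e^{-i\theta}\hat z^H z^\star)$, which is minimized over $\theta$ by aligning the phase of $e^{-i\theta}\hat z^H z^\star$ with the positive real axis, yielding the claimed identity.

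For the inequality I would extract the ``first-order'' content of the hypothesis $f(\hat z)\geq f(z^\star)$. Because $C = z^\star(z^\star)^H + \Delta$ and $(z^\star)^Hz^\star = n$, the hypothesis rearranges to
\[
n^2 - |\hat z^H z^\star|^2 \;\leq\; \hat z^H\Delta\hat z - (z^\star)^H\Delta z^\star.
\]
The crucial manoeuvre is to couple the right-hand side to $d_2(\hat z, z^\star)$ by the polarization-type identity
\[
\hat z^H\Delta\hat z - (z^\star)^H\Delta z^\star \;=\; \mathrm{Re}\bigl((\hat z - e^{i\theta^\star}z^\star)^H\Delta(\hat z + e^{i\theta^\star}z^\star)\bigr),
\]
where $\theta^\star$ attains the minimum defining $d_2(\hat z, z^\star)$; the cross terms that appear after expansion are complex conjugates of one another (because $\Delta$ is Hermitian), so they contribute only an imaginary quantity which is killed by $\mathrm{Re}$. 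Cauchy--Schwarz combined with $\|\Delta\|_{\rm op}$ then bounds this by $\|\Delta\|_{\rm op}\cdot d_2(\hat z,z^\star)\cdot 2\sqrt n$, using $\|\hat z + e^{i\theta^\star}z^\star\|_2 \leq 2\sqrt n$ by the triangle inequality.

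For the left-hand side, I would factor $n^2 - |\hat z^H z^\star|^2 = (n-|\hat z^H z^\star|)(n+|\hat z^H z^\star|)$ and, using $|\hat z^H z^\star|\leq n$ together with the equality already proved, bound it from below by $\tfrac{n}{2}\, d_2(\hat z,z^\star)^2$. Chaining the two inequalities and cancelling one factor of $d_2(\hat z,z^\star)$ (the conclusion being trivial if that factor vanishes) delivers $d_2(\hat z,z^\star) \leq 4\|\Delta\|_{\rm op}/\sqrt n$.

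The step I expect to be the main obstacle is the polarization-type rewriting: spotting that the rotated difference $\hat z - e^{i\theta^\star}z^\star$ can be paired against the sum $\hat z + e^{i\theta^\star}z^\star$ to reproduce $\hat z^H\Delta\hat z - (z^\star)^H\Delta z^\star$ exactly up to an imaginary piece is what replaces a crude $O(n\|\Delta\|_{\rm op})$ bound by one that is \emph{linear} in the distance $d_2(\hat z,z^\star)$. Without that coupling the argument would only yield $d_2(\hat z,z^\star) = O(\sqrt{\|\Delta\|_{\rm op}})$, which scales incorrectly in $n$ and is useless for the subsequent estimation-error analysis.
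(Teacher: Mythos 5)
Your proof is correct and follows essentially the same route as the proof of Lemma~4.1 in~\cite{BBS16} to which the paper defers: extract $n^2-|\hat z^Hz^\star|^2\le \hat z^H\Delta\hat z-(z^\star)^H\Delta z^\star$ from $f(\hat z)\ge f(z^\star)$, rewrite the right-hand side as a bilinear expression in the difference $\hat z-e^{i\theta^\star}z^\star$ so Cauchy--Schwarz produces a bound \emph{linear} in $d_2(\hat z,z^\star)$, factor the left-hand side via $n^2-|\hat z^Hz^\star|^2=(n-|\hat z^Hz^\star|)(n+|\hat z^Hz^\star|)\ge \tfrac{n}{2}d_2(\hat z,z^\star)^2$, and cancel one power of $d_2$. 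The only cosmetic difference is that you use the symmetric polarization $(\hat z-v)^H\Delta(\hat z+v)$ whereas the reference uses the equivalent asymmetric telescoping $(\hat z-v)^H\Delta\hat z+v^H\Delta(\hat z-v)$; both give the identical $2\sqrt n\,\|\Delta\|_{\rm op}\,d_2$ bound. (Minor nit: the factor bound uses $|\hat z^Hz^\star|\ge 0$, i.e.\ $n+|\hat z^Hz^\star|\ge n$, not $|\hat z^Hz^\star|\le n$; the latter is only needed to make the radicand in the equality nonnegative.)
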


Lastly, let us record a useful property of Algorithm~\ref{alg:GP}.  Recall that $\tilde{z}\in\T^n$ is a \emph{second-order critical point} of Problem~\eqref{opt:QP} if $w^HS(\tilde{z})w \ge 0$ for all $w\in T_{\tilde{z}}\T^n$, where
$$ S(z) = \Re\left\{ \mbox{Diag}\left( \mbox{diag}(Czz^H) \right) \right\} - C $$
and
$$ T_z\T^n = \left\{ w \in \C^n : \Re\left\{ w_i\bar{z}_i \right\} = 0 \mbox{ for } i=1,\ldots,n \right\} $$
is the tangent space to $\T^n$ at $z\in\T^n$; see~\cite{BBS16,boumal2016nonconvex}.  By considering the second-order necessary optimality conditions of Problem~\eqref{opt:QP}, it can be shown that every global maximizer of Problem~\eqref{opt:QP} is a second-order critical point.  The following result asserts that a second-order critical point of Problem~\eqref{opt:QP} is (i) a fixed point of Algorithm~\ref{alg:GP} and (ii) close to the target phase vector $z^\star$ if the measurement noise $\Delta$ is not too large.

\ignore{As shown in \cite{boumal2016nonconvex}, the accumulation points of Algorithm \ref{alg:GP} are divided into two categories: the good fixed points and the bad fixed points. Roughly speaking, the good fixed points are those close to the true signal $z$, whereas the bad ones are those bounded away from $z$. To avoid GPM from converging to those unfavorable local points, a proper initialization is in order.}

\begin{fact} (\cite[Lemmas 7, 14, 15, and 16]{boumal2016nonconvex}) \label{lem:fix-pt}
Let $\tilde{z}\in\T^n$ be any second-order critical point of Problem~\eqref{opt:QP} and $\tilde{C}= \tfrac{n}{\alpha}\left( I+\tfrac{\alpha}{n}C \right) = C+\tfrac{n}{\alpha}I$. Then, for any $\alpha>0$,
\begin{equation*}
|(C\tilde{z})_j| = (C\tilde{z})_j \overline{(\tilde{z}_j)} \quad\mbox{and}\quad | (\tilde{C}\tilde{z})_j | = (\tilde{C}\tilde{z})_j \overline{(\tilde{z}_j)} \quad\mbox{for } j=1,\ldots,n.
\end{equation*}
Consequently, we have $\tilde{z}^HC\tilde{z}=\|C\tilde{z}\|_1$, $\tilde{z}^H\tilde{C}\tilde{z}=\|\tilde{C}\tilde{z}\|_1$, and
$$\left({\rm Diag}(|\tilde{C}\tilde{z}|)-\tilde{C}\right)\tilde{z} = \left({\rm Diag}(|C\tilde{z}|)-C\right)\tilde{z} = \bz.$$
Moreover, if $\|\Delta\|_{\rm op} \le \tfrac{n}{16}$, then
$$ |(z^\star)^H\tilde{z}| \ge n-4\|\Delta\|_{\rm op} \quad\mbox{and}\quad d_2(\tilde{z},z^\star) \le \sqrt{8\|\Delta\|_{\rm op}}. $$
\end{fact}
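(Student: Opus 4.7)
The plan is to extract the two pointwise identities from the first- and second-order necessary conditions for a local maximum on the manifold $\T^n$, and then use them to obtain the quadratic control that yields the distance bound. For the first-order part, the vanishing of the Riemannian gradient (the projection of the Euclidean gradient $C\tilde z$ onto $T_{\tilde z}\T^n$) reads componentwise as $(C\tilde z)_j = \Re((C\tilde z)_j \bar{\tilde z}_j)\,\tilde z_j$; multiplying by $\bar{\tilde z}_j$ and using $|\tilde z_j|=1$ shows that $(C\tilde z)_j \bar{\tilde z}_j$ is real. To show that it is in fact non-negative, I would apply the second-order condition $w^H S(\tilde z) w \ge 0$ with the tangent vector $w = i\tilde z_j e_j$; this gives $w^H S(\tilde z) w = S(\tilde z)_{jj}\ge 0$, and because $C_{jj} = 1$ (as $\Delta_{jj}=0$ and $|z^\star_j|=1$), this forces $\Re((C\tilde z)_j \bar{\tilde z}_j) \ge 1 > 0$, whence $|(C\tilde z)_j| = (C\tilde z)_j\bar{\tilde z}_j$. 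The corresponding identity for $\tilde C = C+(n/\alpha)I$ follows at once from $(\tilde C\tilde z)_j\bar{\tilde z}_j = (C\tilde z)_j\bar{\tilde z}_j + n/\alpha > 0$. Summing these identities in $j$ yields the norm equalities $\tilde z^H C\tilde z = \|C\tilde z\|_1$ and $\tilde z^H \tilde C\tilde z = \|\tilde C\tilde z\|_1$, while multiplying them through by $\tilde z_j$ produces the vector identities $(\text{Diag}(|C\tilde z|)-C)\tilde z = \bz$ and $(\text{Diag}(|\tilde C\tilde z|)-\tilde C)\tilde z = \bz$.

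For the estimation error bound, let $t = |(z^\star)^H \tilde z|$. Using $\|C\tilde z\|_1 \ge |(z^\star)^H C\tilde z|$ together with the expansion $(z^\star)^H C\tilde z = n(z^\star)^H \tilde z + (z^\star)^H \Delta\tilde z$ and the Cauchy--Schwarz bound $|(z^\star)^H \Delta\tilde z|\le n\|\Delta\|_{\rm op}$, I would obtain $\tilde z^H C\tilde z = \|C\tilde z\|_1 \ge nt - n\|\Delta\|_{\rm op}$. On the other hand, $\tilde z^H C\tilde z = |(z^\star)^H\tilde z|^2 + \tilde z^H\Delta\tilde z \le t^2 + n\|\Delta\|_{\rm op}$. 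Combining the two gives the quadratic inequality $t^2 - nt + 2n\|\Delta\|_{\rm op} \ge 0$, which under $\|\Delta\|_{\rm op}\le n/16$ forces the dichotomy $t \le 4\|\Delta\|_{\rm op}$ or $t \ge n - 4\|\Delta\|_{\rm op}$. Once the small branch is ruled out, the claimed distance bound $d_2(\tilde z, z^\star) \le \sqrt{8\|\Delta\|_{\rm op}}$ follows from the standard identity $d_2^2(\tilde z,z^\star) = 2(n-|(z^\star)^H\tilde z|)$.

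The main obstacle, and where the second-order condition contributes non-trivially, is excluding the small branch. I would parameterize the tangent space as $T_{\tilde z}\T^n = \{iD\lambda : \lambda \in \R^n\}$ with $D=\text{Diag}(\tilde z)$, so that the second-order condition is equivalent to the real symmetric matrix $M := \text{Diag}(|C\tilde z|) - \Re(D^H C D)$ being positive semidefinite on $\R^n$. Writing $z^\star_j \bar{\tilde z}_j = r_j + is_j$ (so that $r_j^2+s_j^2=1$ for every $j$ and $\|r\|^2+\|s\|^2=n$), a short calculation yields $\Re(D^H z^\star(z^\star)^H D) = rr^T + ss^T$. Applying $v^T M v \ge 0$ with $v = r$ and with $v=s$ and adding the two resulting inequalities gives $\|C\tilde z\|_1 \ge \|r\|^4 + \|s\|^4 + 2(r\cdot s)^2 - n\|\Delta\|_{\rm op}$. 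Combined with the elementary bound $\|r\|^4+\|s\|^4 \ge (\|r\|^2+\|s\|^2)^2/2 = n^2/2$ and the upper bound $\tilde z^H C\tilde z \le t^2 + n\|\Delta\|_{\rm op}$, this delivers $t^2 \ge n^2/2 - 2n\|\Delta\|_{\rm op} \ge 3n^2/8$ whenever $\|\Delta\|_{\rm op} \le n/16$. Since $4\|\Delta\|_{\rm op}\le n/4$, this comfortably excludes the small branch and leaves $t \ge n-4\|\Delta\|_{\rm op}$, as required.
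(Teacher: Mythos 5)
Your argument is correct in all its steps, and it fills a gap that the paper itself leaves open: the paper states this Fact purely by citation to~\cite{boumal2016nonconvex} (Lemmas~7, 14, 15, 16) and gives no proof, so there is no in-paper argument to compare against. What you have supplied is a clean, self-contained derivation that, as far as I can tell, follows essentially the same strategy as the cited reference: first-order stationarity gives that $(C\tilde z)_j\bar{\tilde z}_j$ is real, the diagonal second-order condition $S(\tilde z)_{jj}\ge 0$ (via $w=i\tilde z_j e_j$, using $C_{jj}=1$) upgrades this to non-negativity, and the distance bound is obtained by combining $\tilde z^H C\tilde z=\|C\tilde z\|_1$ with the two-sided comparison in $t=|(z^\star)^H\tilde z|$ to get the quadratic dichotomy, then ruling out the small branch with the second-order condition. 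The one thing worth flagging: the paper's stated ``definition'' of a second-order critical point only lists the Hessian inequality $w^HS(\tilde z)w\ge 0$; you (correctly) also invoke first-order stationarity, which is of course part of what ``second-order critical point'' means in~\cite{boumal2016nonconvex}, but is left implicit here. Your exclusion of the small branch, via the real parametrization $w=i\,{\rm Diag}(\tilde z)\lambda$ and the real/imaginary split $z^\star_j\bar{\tilde z}_j=r_j+is_j$ followed by testing $M\succeq 0$ against both $v=r$ and $v=s$ and exploiting $\sum_j|(C\tilde z)_j|(r_j^2+s_j^2)=\|C\tilde z\|_1$, is an elegant and tight way to get $t^2\ge n^2/2-2n\|\Delta\|_{\rm op}\ge 3n^2/8$, which comfortably beats $4\|\Delta\|_{\rm op}\le n/4$; the numerical slack is large, so the argument is robust. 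All the supporting inequalities you use check out: $|(z^\star)^HC\tilde z|\le\|C\tilde z\|_1$, the operator-norm bounds with $\|z^\star\|_2=\|\tilde z\|_2=\sqrt n$, $\|\Re(D^H\Delta D)\|_{\rm op}\le\|\Delta\|_{\rm op}$, $\|r\|^4+\|s\|^4\ge(\|r\|^2+\|s\|^2)^2/2$, and the identity $d_2(\tilde z,z^\star)^2=2(n-t)$.
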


To distinguish the different points of interest on $\T^n$, we shall reserve the notations $z^\star$, $\hat{z}$, and $\tilde{z}$ to denote the target phase vector, a global maximizer of Problem~\eqref{opt:QP}, and a second-order critical point of Problem~\eqref{opt:QP}, respectively in the sequel.

\section{Estimation Performance of the GPM}
Facts~\ref{lem:init} and~\ref{lem:1} show that both the eigenvector estimator $v_C$ and global maximizers of Problem~\eqref{opt:QP} are close to the target phase vector $z^\star$.  In this section, we show that the same is true for all intermediate iterates of Algorithm~\ref{alg:GP}.
In fact, we establish a stronger result: We show that  the $\ell_2$- and $\ell_\infty$-estimation errors of the iterates decrease in each iteration of Algorithm~\ref{alg:GP} and provide explicit bounds on the rates of decrease.

To begin, let us introduce our first result, which concerns the $\ell_2$-estimation errors of the iterates:
\begin{theorem}\label{thm:2-norm}
Suppose that (i) the measurement noise $\Delta$ satisfies $\|\Delta\|_{\rm op}\le\tfrac{n}{16}$, (ii) the step size $\alpha$ satisfies $\alpha\ge2$, and (iii) the initial point $z^0$ is given by $z^0=v_C$.  Then, the sequence of iterates $\{z^k\}_{k\ge0}$ generated by Algorithm~\ref{alg:GP} satisfies
$$ d_2(z^{k+1},z^\star) \le \mu^{k+1}\cdot d_2(z^0,z^\star) +  \frac{\nu}{1-\mu} \frac{8\|\Delta\|_{\rm op}}{\sqrt{n}} \le \left( \mu^{k+1} + \frac{\nu}{1-\mu} \right) \frac{8\|\Delta\|_{\rm op}}{\sqrt{n}} $$
for $k=0,1,\ldots$, where
\begin{equation} \label{eq:mu-nu}
\mu = \frac{16(\alpha\|\Delta\|_{\rm op}+n)}{(7\alpha+8)n} < 1, \quad \nu = \frac{2\alpha}{7\alpha+8}.
\end{equation}
\end{theorem}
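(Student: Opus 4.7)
The plan is to establish a one-step contraction
\[
d_2(z^{k+1},z^\star)\le \mu\,d_2(z^k,z^\star)+\nu\cdot\frac{8\|\Delta\|_{\rm op}}{\sqrt{n}}
\]
by direct analysis of the GPM update, and then iterate it by induction on $k$, using Fact~\ref{lem:init} for the base case $d_2(z^0,z^\star)\le 8\|\Delta\|_{\rm op}/\sqrt{n}$. Telescoping yields exactly the stated geometric bound. An arithmetic check under the standing assumptions $\alpha\ge 2$ and $\|\Delta\|_{\rm op}\le n/16$ shows that $\mu+\nu\le 1$ (equivalent to $\|\Delta\|_{\rm op}\le n(5\alpha-8)/(16\alpha)$), which both ensures $\mu<1$ and keeps the induction hypothesis $d_2(z^k,z^\star)\le 8\|\Delta\|_{\rm op}/\sqrt{n}$ intact along the way.

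To prove the contraction, fix $k$, let $\theta^k$ attain the minimum defining $d_2(z^k,z^\star)$, and write $y=e^{i\theta^k}z^\star$, $h=z^k-y$, $\rho=|(z^\star)^Hz^k|/n$. The optimality of $\theta^k$ makes $y^Hz^k=n\rho$ real and nonnegative, with $1-\rho=\|h\|_2^2/(2n)$. Using $C=z^\star(z^\star)^H+\Delta$, the update in Algorithm~\ref{alg:GP} decomposes as
\[
w^k=(1+\alpha\rho)\,y+\epsilon^k,\qquad \epsilon^k:=h+\tfrac{\alpha}{n}\Delta z^k,
\]
and the decomposition $\Delta z^k=\Delta y+\Delta h$ together with two applications of $\|\Delta v\|_2\le\|\Delta\|_{\rm op}\|v\|_2$ (and $\|y\|_2=\sqrt n$) gives
\[
\|\epsilon^k\|_2\le\Bigl(1+\tfrac{\alpha\|\Delta\|_{\rm op}}{n}\Bigr)\|h\|_2+\tfrac{\alpha\|\Delta\|_{\rm op}}{\sqrt{n}}.
\]

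The heart of the argument is the following per-coordinate lemma: for any $a>0$, unit-modulus $y\in\C$, and $\epsilon\in\C$ with $w:=ay+\epsilon\ne 0$, the normalized scalar $\hat w:=w/|w|$ satisfies
\[
|\hat w-y|^2\le \frac{4|\epsilon|^2}{a^2}.
\]
Writing $r=\Re(\bar y\epsilon)$, $s=\Im(\bar y\epsilon)$, and $p=a+r$, the identity $|\hat w-y|^2=2s^2/[|w|(|w|+p)]$ reduces the claim (in the case $p>0$) to nonnegativity of the polynomial
\[
g(a)=2(r^2+s^2)(a^2+2ar+r^2+s^2)-a^2s^2,
\]
whose discriminant in $a$ equals $-8s^2(r^2+s^2)^2\le 0$ and whose leading coefficient $2r^2+s^2$ is nonnegative, so $g\ge 0$ identically. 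In the remaining case $p\le 0$, we have $|r|\ge a$, whence $4|\epsilon|^2/a^2\ge 4$, while $|\hat w-y|^2\le 4$ trivially. Applying this lemma coordinatewise with $a=1+\alpha\rho$ and summing yields
\[
\|z^{k+1}-y\|_2\le\frac{2\|\epsilon^k\|_2}{1+\alpha\rho}.
\]
The inductive bound $\|h\|_2\le 8\|\Delta\|_{\rm op}/\sqrt{n}\le\sqrt{n}/2$ gives $\rho\ge 7/8$, hence $1+\alpha\rho\ge(7\alpha+8)/8$. Combining the normalization estimate, this lower bound, and the $\|\epsilon^k\|_2$ bound produces precisely $\mu=16(n+\alpha\|\Delta\|_{\rm op})/((7\alpha+8)n)$ and $\nu=2\alpha/(7\alpha+8)$.

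The main obstacle is the per-coordinate lemma $|\hat w-y|^2\le 4|\epsilon|^2/a^2$. The naive estimate $|\hat w-y|\le 2|\epsilon|/|w|$ is too weak because $|w|$ can be much smaller than $a$ when the perturbation partially cancels the signal, and under the mild operator-norm assumption $\|\Delta\|_{\rm op}\le n/16$ no per-coordinate $\ell_\infty$-control on $\Delta z^k$ is available. The lemma sidesteps this issue by replacing $|w|$ with the signal magnitude $a$ at the cost of only a factor of $4$, and the discriminant computation is the piece of algebra that makes the constant drop cleanly to this sharp value. Once the lemma is in hand, the rest of the argument is routine bookkeeping chasing the stated constants.
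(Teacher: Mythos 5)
Your proof is correct and follows essentially the same skeleton as the paper's: decompose the GPM update $w^k$ into a signal component along $z^\star$ (with real coefficient $1+\alpha\rho$, which equals the paper's $\beta_k$) plus a noise term $\epsilon^k$, bound the effect of the coordinatewise normalization, and iterate. Your per-coordinate lemma $|w/|w|-y|\le 2|\epsilon|/a$ for $w=ay+\epsilon$ is, after the substitution $v=w/a$, exactly the scalar ($q$-fixed) content of the paper's Proposition~\ref{lem:q_norm} combined with the scaling trick of Proposition~\ref{prop:iter-bd} (the paper picks $r=\beta_k^{-1}$ precisely to shift the signal to unit amplitude before applying the $2\|w-z\|_q$ bound). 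Where you genuinely diverge is the \emph{proof} of that lemma: the paper argues trigonometrically, minimizing $|re^{i\phi}-1|^2$ over $r\ge 0$ and splitting into two cases on $\phi$ via the half-angle formula, whereas you use the exact identity $|\hat w-y|^2=2s^2/[|w|(|w|+p)]$ and reduce the claim to nonnegativity of a quadratic in $a$ with nonpositive discriminant. Your algebraic argument is arguably cleaner and delivers the sharp constant $2$ in one stroke; the paper's trigonometric version makes the $\ell_q$ generality visually explicit, though your pointwise inequality of course also sums or maxes to give any $\ell_q$. The other small difference is bookkeeping: you carry the stronger invariant $d_2(z^k,z^\star)\le 8\|\Delta\|_{\rm op}/\sqrt n$ through the induction (verified via the observation $\mu+\nu\le 1$ under the standing hypotheses, correct at $\alpha=2$ with equality), while the paper carries the weaker $\|\epsilon^k\|_2\le\sqrt n/2$; both yield $\rho\ge 7/8$ and hence $1+\alpha\rho\ge(7\alpha+8)/8$, and both unroll to the stated geometric bound. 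One tiny point worth making explicit in a writeup: when $w^k_j=0$ the normalization convention sets $z^{k+1}_j=0$, giving $|z^{k+1}_j-y_j|=1$, which is still covered since in that case $|\epsilon^k_j|=1+\alpha\rho$ so the lemma's bound is $4$.
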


Theorem~\ref{thm:2-norm} has two noteworthy features.  First, it does not assume that Algorithm~\ref{alg:GP} converges. Second, it provides a bound on the $\ell_2$-estimation error of each iterate generated by Algorithm~\ref{alg:GP}.  As such, one can terminate Algorithm~\ref{alg:GP} at any iteration and still has a guarantee on the quality of the estimator.

To further illustrate the usefulness of Theorem \ref{thm:2-norm}, recall from Facts~\ref{lem:init} and~\ref{lem:1} that the $\ell_2$-estimation errors of the initial point $v_C$ and the global maximizers of Problem~\eqref{opt:QP} are bounded above by $\tfrac{8\|\Delta\|_{\rm op}}{\sqrt{n}}$ and $\tfrac{4\|\Delta\|_{\rm op}}{\sqrt{n}}$, respectively.  Now, if we take $\alpha=4$ in Algorithm~\ref{alg:GP}, then under the assumptions of Theorem~\ref{thm:2-norm}, we have $\mu \le \tfrac{5}{9}$ and $\nu = \tfrac{2}{9}$. This implies that any accumulation point $z^\infty$ generated by Algorithm~\ref{alg:GP} satisfies 
$$ d_2(z^{\infty},z^\star)\leq \frac{4\|\Delta\|_{\rm op}}{\sqrt{n}},$$ 
which matches the bound on the $\ell_2$-estimation error of any global maximizer of Problem~\eqref{opt:QP}.  Furthermore, if we let $\alpha\rightarrow\infty$, which can be interpreted as using the update $z^{k+1} \gets \tfrac{Cz^k}{|Cz^k|}$ in line 7 of Algorithm~\ref{alg:GP}, then 
$$ d_2(z^{k+1},z^\star) \le \left( \left(\frac{1}{7}\right)^{k+1} + \frac{1}{3} \right) \frac{8\|\Delta\|_{\rm op}}{\sqrt{n}} $$
for $k=0,1,\ldots$.  In this case, our bound is even better than that in Fact~\ref{lem:1} when $k$ is sufficiently large.

Next, we present our result on the $\ell_\infty$-estimation errors of the iterates:
\begin{theorem}\label{thm:infty-norm}
Under the same assumptions as Theorem~\ref{thm:2-norm}, the sequence of iterates $\{z^k\}_{k\ge0}$ generated by Algorithm~\ref{alg:GP} satisfies
$$ d_\infty(z^{k+1},z^\star) \le \gamma^{k+1}\cdot d_\infty(z^0,z^\star) + \frac{\zeta\cdot\mu^k}{1-\gamma/\mu} + \frac{\omega}{1-\gamma} $$
for $k=0,1,2,\dots$, where
$$ \gamma = \frac{16}{7\alpha+8} < 1, \,\, \zeta = \frac{128\alpha\|\Delta\|_{\rm op}^2}{(7\alpha+8)n^{3/2}}, \,\, \omega = \frac{16\alpha}{7\alpha+8}\left( \frac{\nu}{1-\mu}\frac{8\|\Delta\|_{\rm op}^2}{n^{3/2}} + \frac{\|\Delta z^\star\|_\infty}{n} \right), $$
and $\mu,\nu$ are given in~\eqref{eq:mu-nu}, so that $\gamma/\mu<1$.
\end{theorem}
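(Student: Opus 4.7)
The plan is to mirror, in the infinity norm, the coordinate-wise contraction analysis used for Theorem~\ref{thm:2-norm}, and then to absorb the noise contribution $\|\Delta z^k\|_\infty$ using the $\ell_2$-decay that Theorem~\ref{thm:2-norm} has already established. First, for each $k$ I would fix the phase $\theta_k = \arg((z^\star)^H z^k)$ and set $y^k = e^{i\theta_k} z^\star$, so that $b_k := (y^k)^H z^k = |(z^\star)^H z^k|$ is real and nonnegative. Substituting $C = z^\star(z^\star)^H + \Delta$ into the GPM update $w^k = z^k + (\alpha/n) C z^k$ yields the clean decomposition
\begin{equation*}
w^k = s_k\, y^k + r^k, \qquad s_k := 1 + \tfrac{\alpha b_k}{n}, \qquad r^k := (z^k - y^k) + \tfrac{\alpha}{n}\Delta z^k.
\end{equation*}
The identity $b_k = n - d_2(z^k,z^\star)^2/2$, combined with the $\ell_2$-bound of Theorem~\ref{thm:2-norm} and the hypothesis $\|\Delta\|_{\rm op} \le n/16$, will then provide a uniform lower bound on $s_k$ that matches $(7\alpha+8)/8$ (the same constant controlling $\mu$ in Theorem~\ref{thm:2-norm}).

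Next comes the coordinate-wise normalization estimate. Writing $|z^{k+1}_j - y^k_j|^2 = 2(1 - \Re(\bar{y}^k_j z^{k+1}_j))$ with $z^{k+1}_j = w^k_j/|w^k_j|$ and using the decomposition above gives
\begin{equation*}
|z^{k+1}_j - y^k_j|^2 = \frac{2\bigl(|w^k_j| - s_k - \Re(\bar{y}^k_j r^k_j)\bigr)}{|w^k_j|} = \frac{2\bigl(\Im(\bar{y}^k_j r^k_j)\bigr)^2}{|w^k_j|\bigl(|w^k_j| + s_k + \Re(\bar{y}^k_j r^k_j)\bigr)},
\end{equation*}
where the second equality uses $|w^k_j|^2 - (s_k + \Re(\bar{y}^k_j r^k_j))^2 = (\Im(\bar{y}^k_j r^k_j))^2$. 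Bounding $|\Im(\bar{y}^k_j r^k_j)| \le |r^k_j|$ and $|w^k_j| \ge s_k - |r^k_j| \ge s_k/2$ (valid under the hypotheses) yields $|z^{k+1}_j - y^k_j| \le 2|r^k_j|/s_k$; taking the $\ell_\infty$-norm and using $d_\infty(z^{k+1},z^\star) \le \|z^{k+1} - y^k\|_\infty$ gives
\begin{equation*}
d_\infty(z^{k+1},z^\star) \le \gamma \cdot \|z^k - y^k\|_\infty + \gamma \cdot \tfrac{\alpha}{n}\|\Delta z^k\|_\infty, \qquad \gamma = \tfrac{2}{s_k} \le \tfrac{16}{7\alpha+8}.
\end{equation*}

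For the noise term I would decompose $\Delta z^k = e^{i\theta_k}\Delta z^\star + \Delta(z^k - y^k)$ and use $\|\Delta(z^k-y^k)\|_\infty \le \|\Delta\|_{\rm op}\cdot d_2(z^k,z^\star)$, then invoke $d_2(z^k,z^\star) \le (\mu^k + \nu/(1-\mu))\cdot 8\|\Delta\|_{\rm op}/\sqrt n$ from Theorem~\ref{thm:2-norm}. Multiplying by $\gamma\alpha/n$ produces exactly the transient piece $\zeta\mu^k$ (from the $\mu^k$-part) and the $\|\Delta\|_{\rm op}^2$-portion of $\omega$ (from the persistent $\nu/(1-\mu)$-part), while $\gamma(\alpha/n)\|\Delta z^\star\|_\infty$ supplies the $\|\Delta z^\star\|_\infty/n$-portion of $\omega$. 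After identifying $\|z^k - y^k\|_\infty$ with $d_\infty(z^k,z^\star)$ (see below), one arrives at the recursion
\begin{equation*}
d_\infty(z^{k+1},z^\star) \le \gamma \cdot d_\infty(z^k,z^\star) + \zeta\mu^k + \omega,
\end{equation*}
which unrolls via $\sum_{j=0}^k \gamma^{k-j}\mu^j \le \mu^k/(1-\gamma/\mu)$ (and $\gamma/\mu = n/(\alpha\|\Delta\|_{\rm op}+n) < 1$) together with $\sum_{j=0}^k \gamma^{k-j} \le 1/(1-\gamma)$ to yield the stated three-term bound.

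The main obstacle is precisely the identification of $\|z^k - y^k\|_\infty$ with $d_\infty(z^k,z^\star)$, since the $\ell_2$-optimal phase $\theta_k$ need not minimize the $\ell_\infty$-distance. I would resolve this either by bounding the phase discrepancy $|e^{i\theta_k} - e^{i\theta_k^{(\infty)}}|$ by $O(d_2(z^k,z^\star)/\sqrt n)$ via a triangle-inequality argument in $\ell_2$, so that the resulting error can be absorbed into $\zeta\mu^k + \omega$; or by carrying $\|z^k - y^k\|_\infty$ as the primary quantity throughout the induction and relating it to $d_\infty(z^0,z^\star)$ at the base case using the structure of the eigenvector initialization $z^0 = v_C$.
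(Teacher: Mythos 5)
Your overall strategy coincides with the paper's: fix the $\ell_2$-optimal phase to write $w^k$ as (signal) $+$ (error) — your $s_k, y^k, r^k$ are exactly the paper's $\beta_k, e^{i\theta_k}z^\star, e^{i\theta_k}\bigl[(I+\tfrac{\alpha}{n}\Delta)\epsilon^k + \tfrac{\alpha}{n}\Delta z^\star\bigr]$ rewritten without the global phase factored out — then establish a one-step contraction in $\ell_\infty$, bound the noise piece $\|\Delta(z^k-y^k)\|_\infty$ by $\|\Delta\|_{\rm op}\,d_2(z^k,z^\star)$ and plug in the $\ell_2$-decay from Theorem~\ref{thm:2-norm}, and finally unroll the mixed geometric recursion. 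This is the paper's proof. However, two of your intermediate steps deserve scrutiny.

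The coordinate-wise normalization estimate as you derive it does not hold under the theorem's hypotheses. Your chain needs $|r^k_j|\le s_k/2$ to secure $|w^k_j|\ge s_k/2$, but $s_k\le 1+\alpha$ is $n$-independent while a single coordinate of $r^k$ can be of order $\sqrt n$: you only control $\|\Delta z^k\|_\infty\le\|\Delta z^k\|_2\le\|\Delta\|_{\rm op}\sqrt n$, and Theorem~\ref{thm:2-norm} imposes no pointwise smallness on $\Delta z^\star$. So the intermediate inequality $|w^k_j|\ge s_k/2$ can fail. The bound you want, $|z^{k+1}_j - y^k_j|\le 2|r^k_j|/s_k$, is nevertheless true for every $j$ with no side condition — it is exactly the coordinate-wise content of Proposition~\ref{lem:q_norm} applied with $w=w^k_j/s_k$ and $z=y^k_j$ — and that is the route the paper takes (via Proposition~\ref{prop:iter-bd} with $r=\beta_k^{-1}$). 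You should invoke Proposition~\ref{lem:q_norm} here rather than the conditional computation, which is both shorter and unconditional.

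The phase-discrepancy issue you raise at the end is a genuine subtlety and you are right to flag it. Since you work with the $\ell_2$-optimal phase, $\|z^k - y^k\|_\infty\geq d_\infty(z^k,z^\star)$ — the inequality goes the wrong way for closing the recursion on $d_\infty$ — while if one switched to the $\ell_\infty$-optimal phase one would lose the clean identifications $\|\epsilon^k\|_2=d_2(z^k,z^\star)$ and $\beta_k\ge 1+\tfrac{7\alpha}{8}$ inherited from the proof of Theorem~\ref{thm:2-norm}. You should note that the paper's own proof does not actually resolve this: it takes $q=\infty$ in Proposition~\ref{prop:iter-bd} (so that $\|\epsilon^k\|_\infty=d_\infty(z^k,z^\star)$ holds with equality) but then imports $\|\epsilon^k\|_2\le d_2(z^k,z^\star)\le\tfrac{\sqrt n}{2}$ from the $q=2$ analysis, and those two identifications hold simultaneously only when the $\ell_2$- and $\ell_\infty$-optimal phases coincide. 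Your option (b) — carrying $\|z^k-y^k\|_\infty$ in the $\ell_2$-frame as the primary quantity and controlling the phase increment $|e^{i\theta_{k+1}}-e^{i\theta_k}|$ between consecutive $\ell_2$-frames — is the right way to make this airtight; the phase-increment term is of order $d_2(z^{k+1},z^\star)/\sqrt n=O(\|\Delta\|_{\rm op}/n)$ by Theorem~\ref{thm:2-norm}, so it can be absorbed into the transient and persistent pieces, but the precise constants in $\gamma,\zeta,\omega$ will shift. So: same method as the paper, but replace the conditional coordinate-wise calculation by Proposition~\ref{lem:q_norm}, and carry out option (b) to tighten the phase bookkeeping that the stated proof elides.
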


To prove Theorems~\ref{thm:2-norm} and~\ref{thm:infty-norm}, we need the following technical results:
\begin{proposition} \label{lem:q_norm}
For any $w\in \mathbb{C}^n$, $z\in \mathbb{T}^n$, and $q\in[1,\infty]$, we have
$$
\left\| \frac{w}{|w|}-z \right\|_{q} \leq 2 \|w-z\|_{q}.
$$
\end{proposition}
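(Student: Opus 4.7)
The plan is to reduce the vector inequality to an entrywise one and then invoke the monotonicity of the $\ell_q$-norm with respect to componentwise moduli. Specifically, I would first observe that both $\|\frac{w}{|w|} - z\|_q$ and $\|w - z\|_q$ depend only on the vectors of entrywise moduli $\bigl(|\frac{w_j}{|w_j|} - z_j|\bigr)_j$ and $\bigl(|w_j - z_j|\bigr)_j$, and any $\ell_q$-norm is nondecreasing in each component when comparing nonnegative vectors. Hence it suffices to prove the scalar estimate
\[
\left| \frac{w_j}{|w_j|} - z_j \right| \le 2\,|w_j - z_j| \qquad (j=1,\ldots,n),
\]
using only that $|z_j|=1$.

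For the scalar inequality I would split into two cases. If $w_j = 0$, the convention in the preliminaries makes $\frac{w_j}{|w_j|} = 0$, so the left-hand side equals $|z_j| = 1$ and the right-hand side equals $2|z_j| = 2$, and the bound holds trivially. If $w_j \ne 0$, set $u_j = w_j/|w_j|$, so that $|u_j|=1$ and $w_j = |w_j|\, u_j$. Then by the triangle inequality,
\[
|u_j - z_j| \;\le\; |u_j - w_j| + |w_j - z_j| \;=\; \bigl|\,1 - |w_j|\,\bigr| + |w_j - z_j|.
\]
The key step is to bound the first term by $|w_j - z_j|$: since $|z_j|=1$, the reverse triangle inequality gives $\bigl|\,|w_j| - 1\,\bigr| = \bigl|\,|w_j| - |z_j|\,\bigr| \le |w_j - z_j|$. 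Combining these produces the claimed factor-of-$2$ bound.

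Finally, raising the entrywise inequalities to the $q$-th power and summing (or taking the maximum when $q=\infty$) yields
\[
\left\| \frac{w}{|w|} - z \right\|_q \;\le\; 2\,\|w - z\|_q,
\]
uniformly in $q \in [1,\infty]$. There is essentially no conceptual obstacle here; the only subtlety is remembering to handle the zero-entry case using the stated convention for $v/|v|$, and to recognize $\bigl|\,|w_j|-1\,\bigr|$ as a reverse-triangle-inequality expression that can be controlled by $|w_j - z_j|$ precisely because $z$ lies on the torus $\mathbb{T}^n$.
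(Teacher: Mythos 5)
Your proof is correct, and it takes a genuinely different route from the paper's. The paper first normalizes $z=\mathbf{1}$ without loss of generality, writes $w_j = r e^{i\phi}$, and proves the scalar inequality $|e^{i\phi}-1|\le 2|re^{i\phi}-1|$ by explicitly minimizing $|re^{i\phi}-1|^2 = r^2 - 2r\cos\phi + 1$ over $r\ge 0$, then splitting into two cases according to the sign of $\cos\phi$ and invoking the half-angle formula $|e^{i\phi}-1|=2|\sin(\phi/2)|$ together with $|\sin(\phi/2)|\le|\sin\phi|$ on the relevant range. Your argument bypasses the trigonometry entirely: writing $u_j = w_j/|w_j|$, you use $u_j - z_j = (u_j - w_j) + (w_j - z_j)$, observe that $|u_j - w_j| = |u_j|\cdot\bigl|1-|w_j|\bigr| = \bigl|\,|w_j|-|z_j|\,\bigr|$, and then apply the reverse triangle inequality to bound this by $|w_j - z_j|$. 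This makes the factor of $2$ transparent (one copy from each triangle-inequality application) and requires no case split beyond the trivial $w_j=0$ case, whereas the paper's computation is more explicit but relies on a minimization-plus-case-analysis step. Both arguments produce the sharp constant $2$.
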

\begin{proof}
Without loss of generality, we may assume that $z=\mathbf{1}$.  By definition of $\tfrac{w}{|w|}$, it suffices to show that for $j=1,\dots,n$,
\begin{equation*}
\left|\left(\frac{w}{|w|}\right)_j-1 \right|\leq 2|w_j-1|.
\end{equation*}
The above inequality holds trivially if $w_j=0$.  Hence, we may focus on the case where $w_j\neq 0$. We claim that
\begin{equation*}
|e^{i\phi}-1|\leq 2|re^{i\phi}-1|\quad \mbox{for any } \phi\in [0,2\pi) \mbox{ and } r\geq0.
\end{equation*}
To prove this, observe that $|re^{i\phi}-1|^2=r^2-2r\cos\phi+1$.  Thus, we have
\begin{equation*}
\arg\min_{r\geq 0}|re^{i\phi}-1|^2=
\left\{
\begin{array}{c@{\quad}l}
0 & \mbox{if } \phi\in [\tfrac{\pi}{2},\tfrac{3\pi}{2}],\\
\noalign{\smallskip}
\cos\phi & \mbox{if } \phi\in[0,\tfrac{\pi}{2})\cup(\tfrac{3\pi}{2},2\pi),
\end{array}
\right.
\end{equation*}
from which it follows that
\begin{equation} \label{eq:min-val}
\min_{r\geq 0}|re^{i\phi}-1|^2=
\left\{
\begin{array}{c@{\quad}l}
1 & \mbox{if } \phi\in [\tfrac{\pi}{2},\tfrac{3\pi}{2}],\\
\noalign{\smallskip}
\sin^2\phi & \mbox{if } \phi\in[0,\tfrac{\pi}{2})\cup(\tfrac{3\pi}{2},2\pi).
\end{array}
\right.
\end{equation}
Now, for $\phi\in [\tfrac{\pi}{2},\tfrac{3\pi}{2}]$, by the triangle inequality and~\eqref{eq:min-val}, we have
$$|e^{i\phi}-1|\leq 2\leq 2|re^{i\phi}-1|\quad \text{for any } r\geq0. $$
On the other hand, for $\phi\in[0,\tfrac{\pi}{2})\cup(\tfrac{3\pi}{2},2\pi)$, we use the half-angle formula and~\eqref{eq:min-val} to get
$$|e^{i\phi}-1|=\sqrt{2(1-\cos\phi)}=2\left|\sin\frac{\phi}{2}\right|\leq 2\left| \sin\phi\right|\leq 2|re^{i\phi}-1|\quad \text{for any } r\geq0.$$
Combining the above two cases, the proof is completed.
\end{proof}

\begin{proposition} \label{prop:iter-bd}
Let $\{z^k\}_{k\ge0}$ be the sequence of iterates generated by Algorithm~\ref{alg:GP} with $\alpha>0$. For $q\in[1,\infty]$ and $k=0,1,\ldots$, define
\begin{eqnarray*}
\theta_k &=& \arg\min_{\theta\in[0,2\pi)} \|z^k-e^{i\theta}z^\star\|_q, \\
\noalign{\smallskip}
\epsilon^k &=& e^{-i\theta_k}(z^k-e^{i\theta_k}z^\star), \\
\noalign{\smallskip}
\beta_k &=& 1+\alpha+\frac{\alpha}{n} (z^\star)^H(\epsilon^k).
\end{eqnarray*}
Then, for any $r\in\C$ and $k\in\{0,1,\ldots\}$, we have 
$$ d_q(z^{k+1},z^\star) \le 2\|rg^k-z^\star\|_q, $$
where
$$ g^k = \beta_kz^\star + \left( I+\frac{\alpha}{n}\Delta \right)\epsilon^k + \frac{\alpha}{n}\Delta z^\star. $$
\end{proposition}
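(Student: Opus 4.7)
The plan is to unfold one step of Algorithm~\ref{alg:GP} in terms of the ``centered'' variable $\epsilon^k$ and recognize $g^k$ as the unnormalized update (up to a scalar phase), then invoke Proposition~\ref{lem:q_norm} together with the phase-invariance in the definition of $d_q$.

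First, I would write the iterate in the form $z^k = e^{i\theta_k}(z^\star+\epsilon^k)$, which is just a rearrangement of the definition $\epsilon^k = e^{-i\theta_k}z^k - z^\star$. Using $C = z^\star(z^\star)^H+\Delta$ and $(z^\star)^H z^\star = n$, I would compute
\begin{align*}
Cz^k &= e^{i\theta_k}\left[z^\star\bigl(n + (z^\star)^H\epsilon^k\bigr) + \Delta(z^\star+\epsilon^k)\right],
\end{align*}
and then
\begin{align*}
w^k &= z^k + \tfrac{\alpha}{n}Cz^k
= e^{i\theta_k}\Bigl[\bigl(1+\alpha+\tfrac{\alpha}{n}(z^\star)^H\epsilon^k\bigr)z^\star + \bigl(I+\tfrac{\alpha}{n}\Delta\bigr)\epsilon^k + \tfrac{\alpha}{n}\Delta z^\star\Bigr]
= e^{i\theta_k}g^k,
\end{align*}
where the bracketed quantity is exactly $g^k$. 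Hence $z^{k+1} = w^k/|w^k| = e^{i\theta_k}\,g^k/|g^k|$.

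Next, I would exploit the phase-invariance of $d_q$. For any nonzero $r\in\C$, write $r=|r|e^{i\psi}$, so that $rg^k/|rg^k| = e^{i\psi}\,g^k/|g^k|$. Since $d_q(z^{k+1},z^\star) = \min_{\phi}\|e^{-i\phi}z^{k+1} - z^\star\|_q$, we may pick $\phi = \theta_k - \psi$ to obtain
\begin{equation*}
d_q(z^{k+1},z^\star) \;\le\; \Bigl\|e^{-i(\theta_k-\psi)}z^{k+1} - z^\star\Bigr\|_q \;=\; \Bigl\|\tfrac{rg^k}{|rg^k|} - z^\star\Bigr\|_q.
\end{equation*}
Applying Proposition~\ref{lem:q_norm} to $w = rg^k$ and $z = z^\star\in\T^n$ then yields $d_q(z^{k+1},z^\star)\le 2\|rg^k - z^\star\|_q$, as desired. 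The degenerate case $r = 0$ is handled separately via the trivial bound $d_q(z^{k+1},z^\star)\le \|z^{k+1}\|_q+\|z^\star\|_q = 2\|z^\star\|_q = 2\|0\cdot g^k-z^\star\|_q$.

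I do not expect a substantive obstacle: the computation in Step~1 is pure bookkeeping, and the only subtlety is realizing in Step~2 that the free parameter $r$ in the statement merely absorbs the phase ambiguity between the normalization $g^k/|g^k|$ and the phase $\theta_k$, so that Proposition~\ref{lem:q_norm} can be applied after rescaling $g^k$ by any complex number. The upside of leaving $r$ free is that later, when bounding $d_q(z^{k+1},z^\star)$ in Theorems~\ref{thm:2-norm} and~\ref{thm:infty-norm}, one can pick $r$ to cancel the leading scalar factor $\beta_k$ in $g^k$, which is presumably what the authors do in the sequel.
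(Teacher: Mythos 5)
Your proposal is correct and follows essentially the same route as the paper: rewrite $z^k=e^{i\theta_k}(z^\star+\epsilon^k)$, expand $w^k=(I+\tfrac{\alpha}{n}C)z^k=e^{i\theta_k}g^k$, and apply Proposition~\ref{lem:q_norm} after noting that rescaling $g^k$ by any nonzero $r$ does not change its entrywise normalization up to a global phase. The only cosmetic difference is the handling of $r=0$ (you bound it directly via $\|z^{k+1}\|_q+\|z^\star\|_q=2\|z^\star\|_q$, while the paper takes $r\to0$), but both are valid.
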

\begin{proof}
Consider a fixed $k\in\{0,1,\ldots\}$.  By definition, we have
\begin{eqnarray*}
w^k &=& \left( I+\frac{\alpha}{n} C \right) z^k \\
\noalign{\smallskip}
&=& e^{i\theta_k} \left( I+\frac{\alpha}{n}((z^\star)(z^\star)^H+\Delta) \right)(z^\star+\epsilon^k) \\
\noalign{\smallskip}
&=&\left[\left( 1+\alpha+\frac{\alpha}{n}(z^\star)^H(\epsilon^k) \right)z^\star + \left( I+\frac{\alpha}{n}\Delta \right)\epsilon^k + \frac{\alpha}{n}\Delta z^\star\right]e^{i\theta_k} \\
\noalign{\smallskip}
&=& g^k e^{i\theta_k}.
\end{eqnarray*}
Since $z^{k+1}=\tfrac{w^k}{|w^k|}$, it follows from Proposition~\ref{lem:q_norm} that for any $r\in\C\setminus\{0\}$,
$$ d_q(z^{k+1},z^\star) \le \left\| \frac{g^k}{|g^k|}-z^\star \right\|_q = \left\| \frac{rg^k}{|rg^k|}-z^\star \right\|_q \le 2\|rg^k-z^\star\|_q. $$
Since the above inequality holds for all $r\in\C\setminus\{0\}$, by taking $r\rightarrow0$, we see that it holds for $r=0$ as well.
\end{proof}

We are now ready to prove Theorems~\ref{thm:2-norm} and~\ref{thm:infty-norm}.
\begin{proof}[Proof of Theorem~\ref{thm:2-norm}] We prove by induction that for $k=0,1,\ldots$, the following inequalities hold:
\begin{eqnarray}
\|\epsilon^k\|_2 &\le& \frac{\sqrt{n}}{2}, \label{eq:eps-bd} \\
\noalign{\smallskip}
d_2(z^{k+1},z^\star) &\le& \mu \cdot d_2(z^k,z^\star) + \nu\cdot\frac{8\|\Delta\|_{\rm op}}{\sqrt{n}}. \label{eq:recur}
\end{eqnarray}
Indeed, by the definition of $\epsilon^0$, Fact~\ref{lem:init}, and the assumption that $\|\Delta\|_{\rm op}\le\tfrac{n}{16}$, we have $\|\epsilon^0\|_2 = d_2(z^0,z^\star) \le \tfrac{\sqrt{n}}{2}$.  This implies that
\begin{eqnarray}
|\beta_0| &\ge& \left| 1+\alpha+\frac{\alpha}{n}\Re\left( (z^\star)^H(\epsilon^0) \right) \right| \nonumber \\
\noalign{\smallskip}
&=& \left|  1+\alpha+\frac{\alpha}{2n} \left( \|z^\star+\epsilon^0\|_2^2 - \|z^\star\|_2^2 - \|\epsilon^0\|_2^2 \right) \right| \nonumber \\
\noalign{\smallskip}
&=& \left|  1+\alpha+\frac{\alpha}{2n} \left( \|z^0\|_2^2 - \|z^\star\|_2^2 - \|\epsilon^0\|_2^2 \right) \right| \nonumber \\
\noalign{\smallskip}
&\ge& 1+\frac{7\alpha}{8}, \label{eq:beta-bd}
\end{eqnarray}
where the last inequality follows from the fact that $\|z^0\|_2^2=\|z^\star\|_2^2=n$ and $\|\epsilon^0\|_2\le\tfrac{\sqrt{n}}{2}$.  Hence, by taking $r=\beta_0^{-1}$ (which is well-defined) and $q=2$ in Proposition~\ref{prop:iter-bd} and using~\eqref{eq:beta-bd}, we have
\begin{eqnarray}
d_2(z^1,z^\star) &\le& 2\left\| \beta_0^{-1} \left( I+\frac{\alpha}{n}\Delta \right)\epsilon^0 + \beta_0^{-1}\frac{\alpha}{n}\Delta z^\star \right\|_{2} \nonumber \\
\noalign{\smallskip}
&\le& 2|\beta_0^{-1}| \cdot \left[ \left\| \left(I+\frac{\alpha}{n}\Delta\right)\epsilon^0 \right\|_{2} + \frac{\alpha}{n} \| \Delta z^\star\|_{2} \right] \nonumber \\
\noalign{\smallskip}
&\le& \frac{16}{7\alpha+8} \left[ \left(1+\frac{\alpha}{n}\|\Delta\|_{\rm op} \right)\|\epsilon^0\|_{2}+\frac{\alpha}{\sqrt{n}}\|\Delta\|_{\rm op} \right] \nonumber \\
\noalign{\smallskip}
&=& \mu\cdot d_2(z^0,z^\star) + \nu \cdot \frac{8\|\Delta\|_{\rm op}}{\sqrt{n}}. \label{eq:recur-pf}
\end{eqnarray}
Now, suppose that~\eqref{eq:eps-bd} and~\eqref{eq:recur} hold for some $k\ge0$.  By the inductive hypothesis and the assumption that $\|\Delta\|_{\rm op}\le\tfrac{n}{16}$ and $\alpha\ge2$, we have
\begin{eqnarray*}
\|\epsilon^{k+1}\|_2 &=& d_2(z^{k+1},z^\star) \,\,\,\le\,\,\, \mu\cdot d_2(z^k,z^\star)+\nu \cdot \frac{8\|\Delta\|_{\rm op}}{\sqrt{n}} \\
\noalign{\smallskip}
&=& \mu\|\epsilon^k\|_2 + \nu \cdot \frac{8\|\Delta\|_{\rm op}}{\sqrt{n}} \\
\noalign{\smallskip}
&\le& \frac{8(\alpha\|\Delta\|_{\rm op}+n)}{(7\alpha+8)\sqrt{n}} + \frac{16\alpha\|\Delta\|_{\rm op}}{(7\alpha+8)\sqrt{n}} \\
\noalign{\smallskip}
&\le& \frac{\sqrt{n}}{2}.
\end{eqnarray*}
Using the same argument as the derivation of the inequality~\eqref{eq:beta-bd}, we have $|\beta_{k+1}|\ge1+\tfrac{7\alpha}{8}$.  Hence, following the same derivation as the inequality~\eqref{eq:recur-pf}, we obtain $d_2(z^{k+2},z^\star) \le \mu\cdot d_2(z^{k+1},z^\star) + \nu\cdot \tfrac{8\|\Delta\|_{\rm op}}{\sqrt{n}}$.  This completes the inductive step.

To complete the proof of Theorem~\ref{thm:2-norm}, it remains to unroll~\eqref{eq:recur} and use Fact~\ref{lem:init}.
\end{proof}

\begin{proof}[Proof of Theorem \ref{thm:infty-norm}]
From the proof of Theorem \ref{thm:2-norm}, we have $\|\epsilon^k\|_2\leq \tfrac{\sqrt{n}}{2}$ and $|\beta_k^{-1}|\leq\tfrac{8}{7\alpha+8}$ for $k=0,1,\ldots$.  By taking $r=\beta_k^{-1}$ and $q=\infty$ in Proposition~\ref{prop:iter-bd} and using Theorem~\ref{thm:2-norm}, we compute
\begin{eqnarray}
& & d_\infty(z^{k+1},z^\star) \nonumber \\
\noalign{\smallskip}
&\le& 2\|\beta_k^{-1}g^k-z^\star\|_\infty \nonumber \\
\noalign{\smallskip}
&\le& 2|\beta_k^{-1}| \cdot \left[ \|\epsilon^k\|_\infty + \frac{\alpha}{n} \left( \|\Delta\epsilon^k\|_\infty + \|\Delta z^\star\|_\infty \right) \right] \nonumber \\
\noalign{\smallskip}
&\le& \frac{16}{7\alpha+8}\left[ d_\infty(z^k,z^\star) + \frac{\alpha}{n} \left( \|\Delta\epsilon^k\|_2 + \|\Delta z^\star\|_\infty \right) \right] \nonumber \\
\noalign{\smallskip}
&\le& \frac{16}{7\alpha+8}\left[ d_\infty(z^k,z^\star) + \frac{\alpha}{n} \left( \|\Delta\|_{\rm op} \cdot \|\epsilon^k\|_2 + \|\Delta z^\star\|_\infty \right) \right] \nonumber \\
\noalign{\smallskip}
&\le& \frac{16}{7\alpha+8}\left[ d_\infty(z^k,z^\star) + \frac{\alpha}{n} \left( \left( \mu^k+\frac{\nu}{1-\mu} \right) \frac{8\|\Delta\|_{\rm op}^2}{\sqrt{n}} + \|\Delta z^\star\|_\infty \right) \right] \nonumber \\
\noalign{\smallskip}
&=& \gamma \cdot d_\infty(z^k,z^\star) + \zeta\cdot\mu^k + \omega. \label{ineq:infty_norm}
\end{eqnarray}
Since $\alpha\geq2$, we have $\gamma\in(0,1)$ and 
$$ \frac{\gamma}{\mu}=\frac{16}{(7\alpha+8)}\frac{(7\alpha+8)n}{16(\alpha\|\Delta\|_{\rm op}+n)}=\frac{n}{\alpha\|\Delta\|_{\rm op}+n}\in (0,1). $$
It follows from~\eqref{ineq:infty_norm} that
\begin{eqnarray*}
d_\infty(z^{k+1},z^\star) &\le& \gamma \cdot d_\infty(z^k,z^\star) + \zeta\cdot\mu^k + \omega \\
\noalign{\smallskip}
&\le& \gamma^{k+1}\cdot d_\infty(z^0,z^\star) + \zeta\sum_{j=0}^k \gamma^j\mu^{k-j} + \omega\sum_{j=0}^k\gamma^j \\
\noalign{\smallskip}
&\le& \gamma^{k+1}\cdot d_\infty(z^0,z^\star) + \frac{\zeta\cdot\mu^k}{1-\gamma/\mu} + \frac{\omega}{1-\gamma}.
\end{eqnarray*}
This completes the proof.
\end{proof}

By specializing the above results to the Gaussian noise setting, we obtain the following corollary:
\begin{corollary}\label{cor:1}
Suppose that the measurement noise $\Delta$ takes the form $\Delta=\sigma W$, where $\sigma^2>0$ is the noise power satisfying $\sigma\in\left(0,\tfrac{\sqrt{n}}{48}\right]$ and $W\in\mathbb{H}^n$ is a Wigner matrix.  Suppose further that the step size $\alpha$ satisfies $\alpha\ge2$ and the initial point $z^0$ is given by $z^0=v_C$.  Then, with probability at least $1-2n^{-5/4}-2e^{-n/2}$, the sequence of iterates $\{z^k\}_{k\ge0}$ generated by Algorithm~\ref{alg:GP} satisfies
\begin{eqnarray*}
d_2(z^{k+1},z^\star) &\le& \left(\frac{\alpha+16}{7\alpha+8}\right)^{k+1}d_2(z^0,z^\star) + \frac{24\alpha}{3\alpha-4}\sigma, \\
\noalign{\smallskip}
d_\infty(z^{k+1},z^\star) &\le& \left(\frac{16}{7\alpha+8}\right)^{k+1}d_\infty(z^0,z^\star) + \left( \frac{\alpha+16}{7\alpha+8}\right)^{k+1}\frac{\sqrt{n}}{2} \\
\noalign{\smallskip}
& & \,\,+\,\, \frac{48\alpha}{7\alpha+8}\left( \sqrt{\log n}+\frac{24\alpha}{3\alpha-4}\sigma \right)\frac{\sigma}{\sqrt{n}}
\end{eqnarray*}
for $k=0,1,\ldots$.
\end{corollary}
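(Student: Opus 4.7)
The plan is to specialize Theorems~\ref{thm:2-norm} and~\ref{thm:infty-norm} to the Gaussian noise model $\Delta=\sigma W$ by invoking two classical concentration estimates and then carrying out the resulting algebraic simplifications. Specifically, I would use a non-asymptotic bound on the operator norm of a Wigner matrix to obtain $\|W\|_{\rm op}\le 3\sqrt{n}$ with probability at least $1-2e^{-n/2}$, and a Gaussian tail bound combined with a union bound over coordinates to obtain $\|Wz^\star\|_\infty\le c\sqrt{n\log n}$ (for a modest constant $c$) with probability at least $1-2n^{-5/4}$. The latter is justified by the fact that, for each $j$, $(Wz^\star)_j=\sum_{\ell\ne j}W_{j\ell}(z^\star)_\ell$ is a centered complex Gaussian of variance $\Theta(n)$ since $z^\star\in\T^n$. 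Intersecting the two good events via the union bound produces the failure probability $2n^{-5/4}+2e^{-n/2}$ stated in the corollary. On this intersection, $\|\Delta\|_{\rm op}=\sigma\|W\|_{\rm op}\le 3\sigma\sqrt{n}\le n/16$ by the assumption $\sigma\le\sqrt{n}/48$, so the hypotheses of Theorems~\ref{thm:2-norm} and~\ref{thm:infty-norm} are both satisfied.

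Next, I would substitute $3\sigma/\sqrt{n}\le 1/16$ into the definition~\eqref{eq:mu-nu} of $\mu$, which gives $\mu\le(\alpha+16)/(7\alpha+8)$. This in turn yields $1-\mu\ge(6\alpha-8)/(7\alpha+8)$ and hence the clean bound $\nu/(1-\mu)\le\alpha/(3\alpha-4)$. Combining these with the operator-norm estimate $\|\Delta\|_{\rm op}\le 3\sigma\sqrt{n}$ produces $(\nu/(1-\mu))\cdot 8\|\Delta\|_{\rm op}/\sqrt{n}\le 24\alpha\sigma/(3\alpha-4)$. Plugging this into the recursion of Theorem~\ref{thm:2-norm} and noting that $\mu^{k+1}d_2(z^0,z^\star)\le\left(\frac{\alpha+16}{7\alpha+8}\right)^{k+1}d_2(z^0,z^\star)$ gives the stated $\ell_2$ bound.

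For the $\ell_\infty$ bound, the key algebraic observation is the identity $\zeta/(1-\gamma/\mu)=\mu\cdot 8\|\Delta\|_{\rm op}/\sqrt{n}$, which follows from the closed-form expression $1-\gamma/\mu=\alpha\|\Delta\|_{\rm op}/(\alpha\|\Delta\|_{\rm op}+n)$ together with direct substitution of $\gamma,\mu,\zeta$. Consequently, the middle term of Theorem~\ref{thm:infty-norm} equals $\mu^{k+1}\cdot 8\|\Delta\|_{\rm op}/\sqrt{n}$, and under $\sigma\le\sqrt{n}/48$ this is bounded by $\mu^{k+1}\cdot 24\sigma\le\left(\frac{\alpha+16}{7\alpha+8}\right)^{k+1}\frac{\sqrt{n}}{2}$, which yields the second term in the corollary. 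The last term is then obtained by substituting the concentration bounds and the inequality $\nu/(1-\mu)\le\alpha/(3\alpha-4)$ into the definition of $\omega$ and factoring out $\sigma/\sqrt{n}$. The main obstacle is not conceptual but purely bookkeeping: one must carefully verify the algebraic identity for $\zeta/(1-\gamma/\mu)$, track how the noise bound $\sigma\le\sqrt{n}/48$ tightens each of $\mu$, $\nu/(1-\mu)$, and $\omega/(1-\gamma)$, and ensure all the resulting simplifications match the clean closed forms displayed in the statement.
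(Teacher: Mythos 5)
Your approach is essentially the same as the paper's: invoke the concentration estimates for $\|W\|_{\rm op}$ and $\|Wz^\star\|_\infty$ (the paper cites \cite[Proposition~3.3]{BBS16}, which provides exactly the bounds $\|W\|_{\rm op}\le3\sqrt{n}$ and $\|Wz^\star\|_\infty\le3\sqrt{n\log n}$ with the stated failure probability), then substitute $\|\Delta\|_{\rm op}\le3\sigma\sqrt{n}\le n/16$ into Theorems~\ref{thm:2-norm} and~\ref{thm:infty-norm} and simplify. Your algebraic simplifications — $\mu\le(\alpha+16)/(7\alpha+8)$, $\nu/(1-\mu)\le\alpha/(3\alpha-4)$, and the identity $\zeta/(1-\gamma/\mu)=\mu\cdot8\|\Delta\|_{\rm op}/\sqrt{n}$ — all check out and give the first two terms of the $\ell_\infty$ bound correctly.

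One small caveat: for the last term you say you substitute into "the definition of $\omega$," but Theorem~\ref{thm:infty-norm} actually provides $\omega/(1-\gamma)$, and $1-\gamma=(7\alpha-8)/(7\alpha+8)<1$. Your own derivation of $\omega$ reproduces exactly the displayed coefficient $\tfrac{48\alpha}{7\alpha+8}$, so carrying the extra factor $1/(1-\gamma)$ would give $\tfrac{48\alpha}{7\alpha-8}$ instead. This matches the paper's stated corollary, which appears to drop the $1/(1-\gamma)$ factor as well — so you are faithfully reproducing the paper's computation, but both your write-up and the corollary statement should either carry the $1/(1-\gamma)$ or explicitly argue that it can be absorbed; as written the third term is not an immediate consequence of Theorem~\ref{thm:infty-norm}.
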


\begin{proof}
By~\cite[Proposition 3.3]{BBS16}, we have $\|W\|_{\rm op}\le3\sqrt{n}$ and $\|Wz^\star\|_\infty \le 3\sqrt{n\log n}$ with probability at least $1-2n^{-5/4}-2e^{-n/2}$.  The result then follows by combining these estimates with the bounds in Theorems~\ref{thm:2-norm} and~\ref{thm:infty-norm}.
\end{proof}

\noindent Note that by Fact~\ref{lem:init} and~\cite[Proposition 3.3]{BBS16}, we have $d_2(z^0,z^\star)\le24\sigma$ with high probability.  Hence, for $\alpha>4$ and $k$ sufficiently large, the bound on the $\ell_2$-estimation error will be strictly less than $12\sigma$, which is better than that obtained from Fact~\ref{lem:1} for any maximum likelihood estimator (which is a global maximizer of Problem~\eqref{opt:QP}) of the target phase vector $z^\star$.  Furthermore,  for $k=0,1,\ldots$, since $d_2(z^k,z^\star)^2 \le 2n$, we have
\begin{eqnarray*}
& & \mathbb{E}\left[d_2(z^{k+1},z^\star)^2\right] \\
\noalign{\smallskip}
&\le& 1152\left(1-2n^{-5/4}-2e^{-n/2}\right)\left[ \left(\frac{\alpha+16}{7\alpha+8}\right)^{2(k+1)}+ \left(\frac{\alpha}{3\alpha-4}\right)^2 \right]\sigma^2 \\
\noalign{\smallskip}
&& \,\,+\,\, \left(2n^{-5/4}+2e^{-n/2}\right)(2n)  \\
\noalign{\smallskip}
&\le& (c_1+c_2\tau^k)\sigma^2
\end{eqnarray*}
for some constants $c_1,c_2>0$ and $\tau\in(0,1)$.  This shows that the expected squared $\ell_2$-estimation errors of the iterates generated by Algorithm~\ref{alg:GP} are all on the order of $\sigma$, which matches the Cram\'{e}r-Rao bound developed in~\cite{boumal2014cramer}.  It is worth noting that the above conclusions hold even when the noise level is $\sigma=O(n^{1/2})$, which is the least restrictive among similar results in the literature; cf.~\cite{BBS16,boumal2016nonconvex}.  Our result explains in part the excellent numerical estimation performance of the GPM observed in~\cite{boumal2016nonconvex} even when the noise level is close to $O(n^{1/2})$.


\section{Convergence Rate of the GPM}
Although the results in the previous section show that Algorithm~\ref{alg:GP} generates increasingly accurate (in the $\ell_2$ and $\ell_\infty$ sense) estimators of the target phase vector $z^\star$, they do not shed any light on its convergence behavior.  On the other hand, recall from Fact~\ref{fact:global-convergence} that the sequence of iterates generated by Algorithm~\ref{alg:GP} will converge to a global maximizer of Problem~\eqref{opt:QP} under suitable assumptions on the measurement noise $\Delta$ and step size $\alpha$.  However, it does not give the rate of convergence.  In this section, we prove that under weaker assumptions than those of Fact~\ref{fact:global-convergence}, both the sequence of iterates and the associated sequence of objective values generated by Algorithm~\ref{alg:GP} will converge \emph{linearly} to a global maximizer and the optimal value of Problem~\eqref{opt:QP}, respectively.  Specifically, we have the following result:
\begin{theorem}\label{thm:convergence-rate}
  Suppose that (i) the measurement noise $\Delta$ satisfies $\|\Delta\|_{\rm op}\le\tfrac{n^{3/4}}{312}$ and $\|\Delta z^\star\|_\infty \le \tfrac{n}{24}$, (ii) the step size $\alpha$ satisfies $\alpha\in\left[ 4,\tfrac{n}{\|\Delta\|_{\rm op}} \right)$, and (iii) the initial point $z^0$ is given by $z^0=v_C$.  Then, the sequence of iterates $\{z^k\}_{k\ge0}$ generated by Algorithm~\ref{alg:GP} satisfies
  \begin{eqnarray*}
    f(\hat{z}) - f(z^k) &\le& \left( f(\hat{z})-f(z^0) \right) \lambda^k, \\
    \noalign{\smallskip}
    d_2(z^k,\hat{z}) &\leq& a \left( f(\hat{z})-f(z^0) \right)^{1/2} \lambda^{k/2}
  \end{eqnarray*}
  for $k=0,1,\ldots$, where $a>0,\lambda\in(0,1)$ are quantities that depend only on $n$ and $\alpha$, and $\hat{z}$ is any global maximizer of Problem~\eqref{opt:QP}.
\end{theorem}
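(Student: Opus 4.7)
The plan is to establish linear convergence of $\{f(z^k)\}$ via the classical error-bound paradigm, combining (a) a sufficient ascent inequality, (b) a local error bound relating $d_2(z^k,\hat z)$ to a computable residual, and (c) a quadratic upper bound on the optimality gap. Concretely, I would derive three estimates of the form
\begin{equation*}
f(z^{k+1})-f(z^k)\ge c_1\|z^{k+1}-z^k\|_2^2,\quad d_2(z^k,\hat z)\le c_2\|z^{k+1}-z^k\|_2,\quad f(\hat z)-f(z^k)\le c_3\,d_2(z^k,\hat z)^2,
\end{equation*}
with $c_1,c_2,c_3>0$ depending only on $n,\alpha$, and $\|\Delta\|_{\rm op}$. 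Chaining them, with $a_k:=f(\hat z)-f(z^k)$, yields $a_k\le(c_2^2c_3/c_1)(a_k-a_{k+1})$, hence $a_{k+1}\le\lambda\,a_k$ with $\lambda=1-c_1/(c_2^2c_3)\in(0,1)$, which is the first bound of the theorem. The iterate bound then follows from $d_2(z^k,\hat z)^2\le c_2^2\|z^{k+1}-z^k\|_2^2\le(c_2^2/c_1)(f(z^{k+1})-f(z^k))\le(c_2^2/c_1)\,a_k\le(c_2^2/c_1)\,a_0\lambda^k$, so one can take $a=\sqrt{c_2^2/c_1}$.

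For (a), set $\tilde C=C+(n/\alpha)I$. Since $\lambda_{\min}(C)\ge-\|\Delta\|_{\rm op}$ and $\alpha<n/\|\Delta\|_{\rm op}$, we have $\tilde C\succeq(n/\alpha-\|\Delta\|_{\rm op})I\succ 0$; because $\|z^k\|_2^2=n$, the objective satisfies $f(z)=z^H\tilde Cz-n^2/\alpha$ on $\T^n$. Noting that $w^k=(\alpha/n)\tilde Cz^k$ and that $z^{k+1}_j=w^k_j/|w^k_j|$ maximizes $\Re(\bar y_j w^k_j)$ over $|y_j|=1$, one has $\Re\langle z^{k+1}-z^k,\tilde Cz^k\rangle\ge 0$. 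Expanding
\begin{equation*}
f(z^{k+1})-f(z^k)=2\Re\langle z^{k+1}-z^k,\tilde Cz^k\rangle+(z^{k+1}-z^k)^H\tilde C(z^{k+1}-z^k),
\end{equation*}
and bounding the quadratic term below by $\lambda_{\min}(\tilde C)\|z^{k+1}-z^k\|_2^2$ yields $c_1=n/\alpha-\|\Delta\|_{\rm op}$. For (c), phase-aligning $\delta=z-e^{i\theta^\star}\hat z$ (with $\theta^\star$ attaining $d_2(z,\hat z)$) and using the critical-point identity $C\hat z={\rm Diag}(|C\hat z|)\hat z$ from Fact~\ref{lem:fix-pt}, a direct expansion gives
\begin{equation*}
f(\hat z)-f(z)=\sum_{j=1}^n|C\hat z|_j|\delta_j|^2-\delta^HC\delta\le\bigl(\|C\hat z\|_\infty+\|\Delta\|_{\rm op}\bigr)\,d_2(z,\hat z)^2,
\end{equation*}
with the first equality using $\Re(\bar z_j\hat z_j')-1=-\tfrac12|\delta_j|^2$ on $\T^n$; thus $c_3=O(n)$.

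The main obstacle is the error bound (b), which I would derive from Propositions~\ref{prop:err-bd} and~\ref{prop:EB-2crit}: these bound the distance from $z^k$ to the set of second-order critical points by the computable residual $r(z)=({\rm Diag}(|\tilde Cz|)-\tilde C)\,z$, which vanishes precisely at fixed points of Algorithm~\ref{alg:GP} (cf.~Fact~\ref{lem:fix-pt}). The algorithmic identity $\tilde Cz^k={\rm Diag}(|\tilde Cz^k|)\,z^{k+1}$ rewrites the residual at an iterate as $r(z^k)={\rm Diag}(|\tilde Cz^k|)(z^k-z^{k+1})$, giving $\|r(z^k)\|_2\le\|\tilde Cz^k\|_\infty\,\|z^{k+1}-z^k\|_2$, where $\|\tilde Cz^k\|_\infty$ is uniformly bounded under the stated noise hypotheses by a standard perturbation argument. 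Crucially, the by-product remark following Proposition~\ref{prop:EB-2crit} asserts that in the present noise regime $\|\Delta\|_{\rm op}\le n^{3/4}/312$ every second-order critical point coincides with $\hat z$ up to a global phase; since $d_2$ is phase-invariant, this converts the distance-to-critical-set bound into $d_2(z^k,\hat z)\le c_2\|z^{k+1}-z^k\|_2$. To ensure the iterates stay inside the neighborhood where the local error bound applies, I would invoke Theorem~\ref{thm:2-norm} together with Fact~\ref{lem:1} to show $d_2(z^k,\hat z)=O(\|\Delta\|_{\rm op}/\sqrt n)$ throughout the iteration; the initialization hypothesis $z^0=v_C$ and the strengthened noise assumptions $\|\Delta\|_{\rm op}\le n^{3/4}/312$, $\|\Delta z^\star\|_\infty\le n/24$ are what enable this neighborhood argument.
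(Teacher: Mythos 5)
Your overall strategy coincides with the paper's: establish the three inequalities of Proposition~\ref{prop:alg-prop} (sufficient ascent, cost-to-go, safeguard), splice in the error bound of Proposition~\ref{prop:err-bd}, and chain them into a contraction $a_{k+1}\le\lambda a_k$. The derivations of sufficient ascent and the cost-to-go estimate are correct (the latter phrased in terms of $C$ rather than $\tilde C$, an immaterial cosmetic difference), as is the algorithmic identity $\tilde C z^k = {\rm Diag}(|\tilde Cz^k|)z^{k+1}$ used for the safeguard, and the observation from the proof of Theorem~\ref{thm:2-norm} that $d_2(z^k,z^\star)\le\sqrt n/2$ for all $k$ justifies that the iterates remain in the region where the error bound holds.

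There is, however, a genuine error in how you plan to obtain the error bound (b). You propose to bound the distance from $z^k$ to the set of second-order critical points via Proposition~\ref{prop:EB-2crit} and then upgrade it to a distance to $\hat z$ using the by-product remark that every second-order critical point is globally optimal. But Proposition~\ref{prop:EB-2crit} and that remark require $\|\Delta\|_{\rm op}\le n^{2/3}/32768$, which is strictly stronger than the hypothesis $\|\Delta\|_{\rm op}\le n^{3/4}/312$ of the theorem you are trying to prove; under the theorem's assumptions this detour is not available. Fortunately the detour is also unnecessary: Proposition~\ref{prop:err-bd} is already formulated directly in terms of a global maximizer $\hat z$, namely $d_2(z,\hat z)\le\tfrac{8}{n}\rho(z)$, and its hypotheses are exactly those of the theorem. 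Replacing the appeal to Proposition~\ref{prop:EB-2crit} by a direct application of Proposition~\ref{prop:err-bd} repairs the argument and makes it identical in structure to the paper's proof.
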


Theorem~\ref{thm:convergence-rate} improves upon Fact~\ref{fact:global-convergence} in two aspects.  First, Theorem~\ref{thm:convergence-rate} holds under a less restrictive requirement on the measurement noise $\Delta$.  Specifically, it requires that $\|\Delta\|_{\rm op}=O(n^{3/4})$ and $\|\Delta z^\star\|_\infty=O(n)$, while Fact~\ref{fact:global-convergence} requires that $\|\Delta\|_{\rm op}=O(n^{2/3})$ and $\|\Delta z^\star\|_\infty=O(n^{2/3}\sqrt{\log n})$. Second, Theorem~\ref{thm:convergence-rate} is more quantitative than Fact~\ref{fact:global-convergence} in the sense that it also gives the rate at which Algorithm~\ref{alg:GP} converges.  Consequently, we resolve an open question raised in~\cite{boumal2016nonconvex}.

The proof of Theorem~\ref{thm:convergence-rate} consists of two main parts.  The first, which is the more challenging part, is to establish the following \emph{error bound} for Problem~\eqref{opt:QP}.  Such a bound provides a computable estimate of the distance between any point in a neighborhood of $z^\star$ and the set of global maximizers of Problem~\eqref{opt:QP}, which could be of independent interest.
\begin{proposition} \label{prop:err-bd}
Let $\Sigma:\T^n\rightarrow\mathbb{H}^n$ and $\rho:\T^n\rightarrow\R_+$ be defined as
$$
\Sigma(z)={\rm Diag}(|\tilde{C}z|)-\tilde{C}, \quad
\rho(z)= \|\Sigma(z)z\|_2 = \left\| \left({\rm Diag}(|\tilde{C}z|)-\tilde{C}\right)z \right\|_2,
$$
where $\tilde{C}=C+\frac{n}{\alpha}I$.  Under the assumptions of Theorem~\ref{thm:convergence-rate}, for any point $z\in\T^n$ satisfying $d_2(z,z^\star)\le\tfrac{\sqrt{n}}{2}$ and any global maximizer $\hat{z}\in\T^n$ of Problem~\eqref{opt:QP}, we have
$$ d_2(z,\hat{z}) \le \frac{8}{n}\rho(z). $$
\end{proposition}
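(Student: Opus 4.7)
The plan is to establish the error bound via a Polyak-Lojasiewicz--style inner product identity. After rotating $\hat z$ by a global phase (which preserves it as a global maximizer), I may assume $d_2(z,\hat z)=\|z-\hat z\|_2$, and write $\delta:=z-\hat z$. Since $\hat z$ is a global maximizer of~\eqref{opt:QP}, it is a second-order critical point, so Fact~\ref{lem:fix-pt} gives $\Sigma(\hat z)\hat z=\bz$. Starting from $\Sigma(z)z=\Sigma(z)z-\Sigma(\hat z)\hat z$ and taking the real inner product with $\delta$, the elementary identities $\Re(z_j\bar{\delta}_j)=|\delta_j|^2/2$ and $\Re(\hat z_j\bar{\delta}_j)=-|\delta_j|^2/2$ (both consequences of $|z_j|=|\hat z_j|=1$ combined with $|z_j-\hat z_j|^2=2-2\Re(z_j\bar{\hat z}_j)$) produce the key identity
\begin{equation*}
\Re\langle\Sigma(z)z,\delta\rangle=\delta^H M\delta,\qquad M:=\tfrac{1}{2}\mathrm{Diag}(|\tilde Cz|+|\tilde C\hat z|)-\tilde C.
\end{equation*}
By Cauchy-Schwarz, $\rho(z)\cdot\|\delta\|_2\ge \delta^H M\delta$, so it suffices to establish $\delta^H M\delta\ge(n/8)\|\delta\|_2^2$.

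To lower bound $\delta^H M\delta$, I would bound the diagonal of $M$ from below and the quadratic $\delta^H\tilde C\delta$ from above. The diagonal entries satisfy $|\tilde C\hat z|_j=|C\hat z|_j+n/\alpha$ (Fact~\ref{lem:fix-pt}) and are close to $n+n/\alpha$ because $|(z^\star)^H\hat z|\ge n-4\|\Delta\|_{\rm op}$ while $\|\Delta\hat z\|_\infty\le\|\Delta z^\star\|_\infty+\|\Delta\|_{\rm op}\cdot d_2(\hat z,z^\star)$ is controlled by Fact~\ref{lem:1}; a parallel estimate handles $|\tilde Cz|_j$ using the hypothesis $d_2(z,z^\star)\le\sqrt{n}/2$. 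For the quadratic form, the decomposition $\tilde C=z^\star(z^\star)^H+\Delta+(n/\alpha)I$ gives
\begin{equation*}
\delta^H\tilde C\delta=|(z^\star)^H\delta|^2+\delta^H\Delta\delta+(n/\alpha)\|\delta\|_2^2,
\end{equation*}
where $|\delta^H\Delta\delta|\le\|\Delta\|_{\rm op}\|\delta\|_2^2$ is negligible. The crux is controlling $|(z^\star)^H\delta|$: by alignment, $\hat z^H\delta=-\|\delta\|_2^2/2\in\R$; phase-rotating $z^\star$ (using that $|(z^\star)^H\delta|$ is phase-invariant) to align with $\hat z$ and invoking Fact~\ref{lem:1} to get $\|z^\star-\hat z\|_2\le 4\|\Delta\|_{\rm op}/\sqrt{n}$ yields
\begin{equation*}
|(z^\star)^H\delta|\le|\hat z^H\delta|+\|z^\star-\hat z\|_2\|\delta\|_2\le\tfrac{\|\delta\|_2^2}{2}+\tfrac{4\|\Delta\|_{\rm op}}{\sqrt{n}}\|\delta\|_2.
\end{equation*}
Squaring and using $\|\delta\|_2\le d_2(z,z^\star)+d_2(\hat z,z^\star)\le\sqrt{n}/2+4\|\Delta\|_{\rm op}/\sqrt{n}$, one obtains $|(z^\star)^H\delta|^2\le(n/8+o(n))\|\delta\|_2^2$, after which $\delta^H M\delta\ge(n/8)\|\delta\|_2^2$ follows by accounting.

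The main obstacle is the careful bookkeeping needed to make the pointwise lower bound on $|\tilde Cz|_j$ go through with enough slack. The natural estimate $\|\Delta z\|_\infty\le\|\Delta z^\star\|_\infty+\|\Delta\|_{\rm op}\cdot d_2(z,z^\star)$ is precisely where both noise hypotheses of Theorem~\ref{thm:convergence-rate} interact: the assumption $\|\Delta z^\star\|_\infty\le n/24$ directly caps one term, while the scaling $\|\Delta\|_{\rm op}\le n^{3/4}/312$ is tailored so that $\|\Delta\|_{\rm op}\cdot\sqrt{n}/2$ remains a small fraction of $n$. Obtaining the precise constant $8/n$ (as opposed to some other $c/n$) requires combining all the error contributions---$\|\Delta\|_{\rm op}\|\delta\|_2^2$, the second-order piece $\|\delta\|_2^4/2$ coming from the alignment identity, and the cross term $4\|\Delta\|_{\rm op}\|\delta\|_2^3/\sqrt{n}$---so that their total does not exceed the $(n-n/\alpha-n/8)\|\delta\|_2^2$ slack of the main term. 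The specific numerical constants in the noise assumptions are chosen exactly to make this accounting close.
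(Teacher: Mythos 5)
Your plan takes a genuinely different route from the paper's, and the core algebra is correct. The identity $\Re\langle\Sigma(z)z,\delta\rangle=\delta^H M\delta$ with $M=\tfrac{1}{2}\mathrm{Diag}(|\tilde Cz|+|\tilde C\hat z|)-\tilde C$ does follow from $\Re(z_j\bar\delta_j)=|\delta_j|^2/2=-\Re(\hat z_j\bar\delta_j)$, the Cauchy--Schwarz reduction $\rho(z)\|\delta\|_2\ge\delta^H M\delta$ is valid, and the bound $|(z^\star)^H\delta|\le\tfrac12\|\delta\|_2^2+\tfrac{4\|\Delta\|_{\rm op}}{\sqrt n}\|\delta\|_2$ is exactly right. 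The paper instead starts from the triangle inequality $\rho(z)\ge\|\Sigma(\hat z)z\|_2-\|(\Sigma(z)-\Sigma(\hat z))z\|_2$, projects $e^{-i\hat\theta}z-\hat z$ onto the orthogonal complement of $\hat z$, and lower-bounds the quadratic form $\hat u^H\Sigma(\hat z)\hat u$; your single inner-product identity compresses that two-piece bookkeeping into one line, which is an attractive simplification.

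However, the step you flag as ``the main obstacle'' contains a real error. You propose to lower-bound $|\tilde Cz|_j$ near $n+n/\alpha$ via $\|\Delta z\|_\infty\le\|\Delta z^\star\|_\infty+\|\Delta\|_{\rm op}\,d_2(z,z^\star)$ and claim that $\|\Delta\|_{\rm op}\cdot\sqrt n/2$ stays a small fraction of $n$. Under the stated hypothesis $\|\Delta\|_{\rm op}\le n^{3/4}/312$, that product can be as large as $n^{5/4}/624$, which exceeds $n$ once $n>624^4$, so the pointwise lower bound you want for $|\tilde Cz|_j$ can become negative and the accounting does not close. The source of the difficulty is that $z$ is an \emph{arbitrary} point with $d_2(z,z^\star)\le\sqrt n/2$, unlike $\hat z$, for which $d_2(\hat z,z^\star)\le 4\|\Delta\|_{\rm op}/\sqrt n$ is tiny by Fact~\ref{lem:1}. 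The fix is simply not to bound $|\tilde Cz|_j$ at all: discard that half of the diagonal via $|\tilde Cz|_j\ge 0$, keep the exact relation $|\tilde C\hat z|_j=|C\hat z|_j+n/\alpha$ from Fact~\ref{lem:fix-pt} together with $|C\hat z|_j\ge n-4\|\Delta\|_{\rm op}-\|\Delta\hat z\|_\infty$, and redo the accounting. With $\alpha\ge4$ one finds
\begin{equation*}
\delta^HM\delta\ge\left[\frac n2-\frac n{2\alpha}-\frac n{16}-6\|\Delta\|_{\rm op}-\frac{\|\Delta\hat z\|_\infty}{2}-\frac{36\|\Delta\|_{\rm op}^2}{n}\right]\|\delta\|_2^2\ge\frac n8\|\delta\|_2^2
\end{equation*}
under the stated noise hypotheses, which is precisely what is needed. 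It is worth noting that the paper's proof also never requires a pointwise bound on $|\tilde Cz|_j$ --- all the leverage comes from the fixed-point structure of $\Sigma(\hat z)$ --- so the obstacle you singled out was in fact a misdiagnosis of where the work lies.
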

Before we prove Proposition~\ref{prop:err-bd}, several remarks are in order.  First, recall from Fact~\ref{lem:1} that every global maximizer $\hat{z}$ of Problem~\eqref{opt:QP} satisfies $d_2(\hat{z},z^\star)\le\tfrac{\sqrt{n}}{2}$ whenever $\|\Delta\|_{\rm op}\le\tfrac{n}{8}$. Together with Proposition~\ref{prop:err-bd}, this shows that up to a global phase, Problem~\eqref{opt:QP} has a unique global maximizer.  Second, the proof of Theorem~\ref{thm:2-norm} reveals that the sequence of iterates $\{z^k\}_{k\ge0}$ generated by Algorithm~\ref{alg:GP} satisfies $d_2(z^k,z^\star)\le\tfrac{\sqrt{n}}{2}$ for $k=0,1,\ldots$ whenever $\|\Delta\|_{\rm op}\le\tfrac{n}{16}$.  Thus, the error bound in Proposition~\ref{prop:err-bd} applies to the entire sequence $\{z^k\}_{k\ge0}$. Third, since every global maximizer $\hat{z}$ of Problem~\eqref{opt:QP} is a second-order critical point, we have $\rho(\hat{z})=0$ by Fact~\ref{lem:fix-pt}.  Proposition~\ref{prop:err-bd} shows that the converse is also true.  Hence, we can view $\rho$ as a surrogate measure of optimality and use it to keep track of Algorithm~\ref{alg:GP}'s progress.

\begin{proof}[Proof of Proposition~\ref{prop:err-bd}]
Since 
\begin{equation}\label{ineq:pf-5}
\rho(z) = \|\Sigma(z)z\|_2 \geq \|\Sigma(\hat{z})z\|_2 - \|(\Sigma(z)-\Sigma(\hat{z}))z\|_2,
\end{equation}
it suffices to establish an upper bound on $\|(\Sigma(z)-\Sigma(\hat{z}))z\|_2$ and a lower bound on $\|\Sigma(\hat{z})z\|_2$.  Towards that end, recall that
$$ \tilde{C}=C+\frac{n}{\alpha}I = (z^\star)(z^\star)^H+\Delta + \frac{n}{\alpha}I $$
and let
$$ \hat{\theta}=\arg\min_{\theta\in[0,2\pi)} \|z-e^{i\theta}\hat{z}\|_2, \quad \hat{\theta}^\star = \arg\min_{\theta\in[0,2\pi)} \|\hat{z}-e^{i\theta}z^\star\|_2. $$
First, we bound
\begin{eqnarray}
  & & \|(\Sigma(z)-\Sigma(\hat{z}))z\|_2 \nonumber \\
  \noalign{\smallskip}
  &=& \left\|\left(\text{Diag}(|\tilde{C}z|)-\text{Diag}(|\tilde{C}\hat{z}|)\right)z\right\|_2 \nonumber \\
\noalign{\smallskip}
&=& \left( \sum_{j=1}^n \left| \left( |(\tilde{C}z)_j| - |(\tilde{C}\hat{z})_j| \right)z_j \right|^2 \right)^{1/2} \nonumber \\
\noalign{\smallskip}
&=& \left\||\tilde{C}e^{-i\hat{\theta}}z|-|\tilde{C}\hat{z}|\right\|_2 \nonumber \\
\noalign{\smallskip}
&\le& \|\tilde{C}(e^{-i\hat{\theta}}z-\hat{z})\|_2 \nonumber\\
\noalign{\smallskip}
&\le& \|(z^\star)(z^\star)^H(e^{-i\hat{\theta}}z-\hat{z})\|_2 + \|\Delta(e^{-i\hat{\theta}}z-\hat{z})\|_2 + \frac{n}{\alpha}\|e^{-i\hat{\theta}}z-\hat{z}\|_2 \nonumber \\
\noalign{\smallskip}
&\le& \sqrt{n} \cdot | (z^\star)^H(e^{-i\hat{\theta}}z-\hat{z}) | + \left( \|\Delta\|_{\rm op} + \frac{n}{\alpha} \right) d_2(z,\hat{z}). \label{ineq:pf-1}
\end{eqnarray}
By definition of $\hat{\theta}$, we have $\hat{z}^H(e^{-i\hat{\theta}}z) = |\hat{z}^Hz|$, which implies that
\begin{equation} \label{eq:norm-hat}
  \| e^{-i\hat{\theta}}z-\hat{z} \|_2^2 = 2 (n - |\hat{z}^Hz|).
\end{equation}
This, together with Fact~\ref{lem:1}, yields
\begin{eqnarray}
  | (z^\star)^H(e^{-i\hat{\theta}}z-\hat{z}) | &\le& | (z^\star-e^{-i\hat{\theta}^\star}\hat{z})^H(e^{-i\hat{\theta}}z-\hat{z}) | + | (e^{-i\hat{\theta}^\star}\hat{z})^H(e^{-i\hat{\theta}}z-\hat{z}) | \nonumber \\
  \noalign{\smallskip}
  &\le& \| z^\star-e^{-i\hat{\theta}^\star}\hat{z} \|_2 \cdot \| e^{-i\hat{\theta}}z-\hat{z} \|_2 + \left| |\hat{z}^Hz|-n \right| \nonumber \\
  \noalign{\smallskip}
  &\le& \frac{4\|\Delta\|_{\rm op}}{\sqrt{n}} \cdot d_2(z,\hat{z}) + \frac{1}{2} d_2(z,\hat{z})^2. \label{ineq:pf-2}
\end{eqnarray}
Upon substituting~\eqref{ineq:pf-2} into~\eqref{ineq:pf-1}, we obtain
\begin{equation} \label{eq:ub}
\|(\Sigma(z)-\Sigma(\hat{z}))z\|_2 \le \left(5\|\Delta\|_{\rm op} + \frac{n}{\alpha} \right) d_2(z,\hat{z}) + \frac{\sqrt{n}}{2} d_2(z,\hat{z})^2. 
\end{equation}

Next, let $\hat{u}=\left( I-\tfrac{1}{n}\hat{z}\hat{z}^H \right)(e^{-i\hat{\theta}}z-\hat{z})$ be the projection of $e^{-i\hat{\theta}}z-\hat{z}$ onto the orthogonal complement of $\mbox{span}(\hat{z})$.  Hence, we have $\hat{u}^H\hat{z}=0$ and
\begin{equation}  \label{ineq:pf-8}
  \|\hat{u}\|_2 \geq \left\| e^{-i\hat{\theta}}z-\hat{z} \right\|_2 - \left\|\frac{1}{n}\hat{z}\hat{z}^H(e^{-i\hat{\theta}}z-\hat{z})\right\|_2 = d_2(z,\hat{z}) - \frac{1}{2\sqrt{n}}d_2(z,\hat{z})^2,
\end{equation}
where the last equality follows from~\eqref{eq:norm-hat}.  Moreover, by definition of $\hat{\theta}^\star$, we have $(z^\star)^H(e^{-i\hat{\theta}^\star}\hat{z})=|(z^\star)^H\hat{z}|$ and
\begin{equation} \label{eq:star-hat}
  |(z^\star)^H\hat{z}| = n-\frac{1}{2}\|\hat{z}-e^{i\hat{\theta}^\star}z^\star\|_2^2.
\end{equation}
Hence,
\begin{eqnarray}
  \hat{u}^H\Sigma(\hat{z})\hat{u} &=& \hat{u}^H\left( \mbox{Diag}(|\tilde{C}\hat{z}|)-\tilde{C} \right)\hat{u} \nonumber \\
  \noalign{\smallskip}
  &=& \hat{u}^H\left( \mbox{Diag}(|C\hat{z}|)-C \right)\hat{u} \label{eq:Ctilde} \\
  \noalign{\smallskip}
  &\ge& \left(\sum_{j=1}^n |(C\hat{z})_j|\cdot |\hat{u}_j|^2\right) - |(z^\star)^H\hat{u}|^2 - \hat{u}^H\Delta\hat{u} \nonumber \\
  \noalign{\smallskip}
  &\ge& \left(|(z^\star)^H\hat{z}| - \|\Delta\hat{z}\|_\infty \right) \|\hat{u}\|_2^2 - \left| \hat{u}^H(z^\star-e^{-i\hat{\theta}^\star}\hat{z}) \right|^2 - \|\Delta\|_{\rm op}\|\hat{u}\|_2^2 \nonumber \\
  \noalign{\smallskip}
  &\ge&  \left( n - \|\Delta\hat{z}\|_\infty  - \|\Delta\|_{\rm op} - \frac{3}{2}\|z^\star-e^{-i\hat{\theta}^\star}\hat{z}\|_2^2 \right) \|\hat{u}\|_2^2 \label{eq:star-hat-2} \\
  \noalign{\smallskip}
  &\ge& \left( n - \|\Delta\hat{z}\|_\infty  - \|\Delta\|_{\rm op} - \frac{24\|\Delta\|_{\rm op}^2}{n} \right) \|\hat{u}\|_2^2, \label{ineq:pf-9}
\end{eqnarray}
where~\eqref{eq:Ctilde} follows from Fact~\ref{lem:fix-pt} and the fact that $\hat{z}$ is a second-order critical point of Problem~\eqref{opt:QP},~\eqref{eq:star-hat-2} is due to~\eqref{eq:star-hat}, and~\eqref{ineq:pf-9} follows from Fact~\ref{lem:1}.  Since $\Sigma(\hat{z})\hat{z}=\bz$ by Fact~\ref{lem:fix-pt}, we obtain from~\eqref{ineq:pf-8} and~\eqref{ineq:pf-9} that
\begin{eqnarray}
  & & \|\Sigma(\hat{z})z\|_2 \,\,\,=\,\,\, \|\Sigma(\hat{z})\hat{u}\|_2 \nonumber \\
  \noalign{\smallskip}
  &\ge& \left( n - \|\Delta\hat{z}\|_\infty  - \|\Delta\|_{\rm op} - \frac{24\|\Delta\|_{\rm op}^2}{n} \right) \left( d_2(z,\hat{z}) - \frac{1}{2\sqrt{n}}d_2(z,\hat{z})^2 \right). \label{ineq:pf-3}
\end{eqnarray}

Now, by Fact~\ref{lem:1} and the assumption that $d_2(z,z^\star)\le\tfrac{\sqrt{n}}{2}$, we have
$$
  d_2(z,\hat{z}) \le d_2(z,z^\star) + d_2(\hat{z},z^\star) \le \frac{\sqrt{n}}{2} + \frac{4\|\Delta\|_{\rm op}}{\sqrt{n}}.
$$
This implies that
\begin{equation} \label{ineq:pf-11}
  d_2(z,\hat{z})^2 \le \left( \frac{\sqrt{n}}{2} + \frac{4\|\Delta\|_{\rm op}}{\sqrt{n}} \right) d_2(z,\hat{z}). 
\end{equation}
Moreover,
\begin{eqnarray}
  \|\Delta\hat{z}\|_\infty &\le& \|\Delta z^\star\|_\infty + \|\Delta(e^{-i\hat{\theta}^\star}\hat{z}-z^\star)\|_\infty \nonumber \\
  \noalign{\smallskip}
  &\le& \|\Delta z^\star\|_\infty + \|\Delta\|_{\rm op}\cdot d_2(\hat{z},z^\star) \nonumber \\
  \noalign{\smallskip}
  &\le& \|\Delta z^\star\|_\infty + \frac{4\|\Delta\|_{\rm op}^2}{\sqrt{n}}. \label{eq:infty-bd}
\end{eqnarray}
It follows from~\eqref{ineq:pf-5},~\eqref{eq:ub},~\eqref{ineq:pf-3},~\eqref{ineq:pf-11}, and~\eqref{eq:infty-bd} that
\begin{eqnarray*}
  \rho(z) &\ge& \left[ \left( \frac{1}{2}-\frac{1}{\alpha} \right)n - \frac{3\|\Delta\hat{z}\|_\infty}{4} - \frac{39\|\Delta\|_{\rm op}}{4}  - \frac{16\|\Delta\|_{\rm op}^2}{n} \right] d_2(z,\hat{z}) \\
  \noalign{\smallskip}
  &\ge& \left[ \frac{n}{4} - \frac{3\|\Delta z^\star\|_\infty}{4} - \frac{39\|\Delta\|_{\rm op}}{4} - \frac{3\|\Delta\|_{\rm op}^2}{\sqrt{n}} - \frac{16\|\Delta\|_{\rm op}^2}{n} \right] d_2(z,\hat{z}) \\
  \noalign{\smallskip}
  &\ge& \frac{n}{8}d_2(z,\hat{z})
\end{eqnarray*}
whenever $\|\Delta\|_{\rm op} \le \tfrac{n^{3/4}}{312}$, $\|\Delta z^\star\|_\infty \le \tfrac{n}{24}$, and $\alpha\ge4$.  This completes the proof.
\end{proof} 

We note that under a slightly more restrictive noise setting, one can establish an error bound similar to that in Proposition~\ref{prop:err-bd} to estimate the distance between any point in a neighborhood of $z^\star$ and the set of \emph{second-order critical points} of Problem~\eqref{opt:QP}.  Specifically, we have the following result:
\begin{proposition} \label{prop:EB-2crit}
Let $\Sigma:\T^n\rightarrow\mathbb{H}^n$ and $\rho:\T^n\rightarrow\R_+$ be as in Proposition~\ref{prop:err-bd}.  Suppose that (i) the measurement noise $\Delta$ satisfies $\|\Delta\|_{\rm op}\le\tfrac{n^{2/3}}{32768}$ and $\|\Delta z^\star\|_\infty \le \tfrac{n}{24}$, and (ii) the parameter $\alpha$ satisfies $\alpha\ge4$.  Then, for any point $z\in\T^n$ satisfying $d_2(z,z^\star)\le\tfrac{\sqrt{n}}{2}$ and any second-order critical point $\tilde{z}\in\T^n$ of Problem~\eqref{opt:QP}, we have
$$ d_2(z,\tilde{z}) \le \frac{8}{n}\rho(z). $$
\end{proposition}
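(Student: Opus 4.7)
The plan is to mirror the proof of Proposition~\ref{prop:err-bd} essentially verbatim, with the global maximizer $\hat z$ replaced throughout by the second-order critical point $\tilde z$, and with every appeal to Fact~\ref{lem:1} replaced by the corresponding (weaker) bound from Fact~\ref{lem:fix-pt}. A close inspection of that earlier proof reveals that global optimality of $\hat z$ is never used directly; only two ingredients about $\hat z$ are actually invoked, namely (a) the second-order critical point identities $\Sigma(\hat z)\hat z = \bz$, $|(C\hat z)_j| = (C\hat z)_j\overline{\hat z_j}$, and $\hat u^H(\mbox{Diag}(|C\hat z|)-C)\hat u \ge 0$ for $\hat u \in T_{\hat z}\T^n$, all of which Fact~\ref{lem:fix-pt} grants to any $\tilde z$, and (b) the closeness estimate $d_2(\hat z,z^\star) \le \tfrac{4\|\Delta\|_{\rm op}}{\sqrt n}$, which for $\tilde z$ must be relaxed to $d_2(\tilde z,z^\star) \le \sqrt{8\|\Delta\|_{\rm op}}$.

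The execution proceeds in the same three stages. First, the upper-bound computation culminating in~\eqref{eq:ub} goes through unchanged, except that the bound on $|(z^\star)^H(e^{-i\tilde\theta}z-\tilde z)|$ now uses $\|z^\star - e^{-i\tilde\theta^\star}\tilde z\|_2 \le \sqrt{8\|\Delta\|_{\rm op}}$ in place of $\tfrac{4\|\Delta\|_{\rm op}}{\sqrt n}$, producing an extra term proportional to $\sqrt{n\|\Delta\|_{\rm op}}\cdot d_2(z,\tilde z)$. Second, the lower-bound chain~\eqref{eq:Ctilde}--\eqref{ineq:pf-9} goes through unchanged with $\tilde u = (I-\tfrac{1}{n}\tilde z\tilde z^H)(e^{-i\tilde\theta}z-\tilde z)$ in place of $\hat u$, except that the term $\tfrac{3}{2}\|z^\star-e^{-i\tilde\theta^\star}\tilde z\|_2^2$ now contributes $12\|\Delta\|_{\rm op}$ instead of $\tfrac{24\|\Delta\|_{\rm op}^2}{n}$, and~\eqref{eq:infty-bd} becomes $\|\Delta\tilde z\|_\infty \le \|\Delta z^\star\|_\infty + 2\sqrt 2\,\|\Delta\|_{\rm op}^{3/2}$. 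Finally,~\eqref{ineq:pf-11} is replaced by $d_2(z,\tilde z) \le \tfrac{\sqrt n}{2}+\sqrt{8\|\Delta\|_{\rm op}}$, which is used to absorb the quadratic factor $d_2(z,\tilde z)^2$ back into a linear $d_2(z,\tilde z)$.

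Assembling the two estimates as in~\eqref{ineq:pf-5} and asking that the net coefficient of $d_2(z,\tilde z)$ exceed $\tfrac{n}{8}$ reduces, after bookkeeping, to requiring that each of $\|\Delta z^\star\|_\infty$, $\|\Delta\|_{\rm op}$, $\sqrt{n\|\Delta\|_{\rm op}}$, and $\|\Delta\|_{\rm op}^{3/2}$ be a sufficiently small fraction of $n$. The binding constraint is the last one: $\|\Delta\|_{\rm op}^{3/2} = O(n)$ forces $\|\Delta\|_{\rm op} = O(n^{2/3})$, which is exactly the source of the $n^{2/3}$ threshold in the hypothesis (as opposed to the $n^{3/4}$ threshold in Proposition~\ref{prop:err-bd}, which was driven instead by the gentler $\tfrac{\|\Delta\|_{\rm op}^2}{\sqrt n}$ term). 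I expect the main obstacle to be purely one of bookkeeping: verifying that the explicit constant $\tfrac{1}{32768}$ is indeed enough, given the $2\sqrt 2\,\|\Delta\|_{\rm op}^{3/2}$ contribution from $\|\Delta\tilde z\|_\infty$ (multiplied by the $\tfrac{3}{4}$-type factor that survives from the $\bigl(1-\tfrac{d_2(z,\tilde z)}{2\sqrt n}\bigr)$ prefactor), together with the several $O(\|\Delta\|_{\rm op})$ and $O(\sqrt{n\|\Delta\|_{\rm op}})$ lower-order terms that must all fit under the $\tfrac{n}{8}$ budget simultaneously.
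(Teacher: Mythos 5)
Your proposal matches the paper's own proof of Proposition~\ref{prop:EB-2crit} essentially step for step: mirror the argument for Proposition~\ref{prop:err-bd}, swap $\hat z$ for $\tilde z$, and in every place where $d_2(\hat z, z^\star) \le 4\|\Delta\|_{\rm op}/\sqrt{n}$ was used (from Fact~\ref{lem:1}) substitute $d_2(\tilde z, z^\star) \le \sqrt{8\|\Delta\|_{\rm op}}$ (from Fact~\ref{lem:fix-pt}). Your accounting of how each quantity changes is accurate and matches the paper's displays: the upper bound picks up a new term $\sqrt{8n\|\Delta\|_{\rm op}}\,d_2(z,\tilde z)$; the lower bound's curvature term $\tfrac{3}{2}\|z^\star - e^{-i\tilde\theta^\star}\tilde z\|_2^2$ becomes $12\|\Delta\|_{\rm op}$ (so the paper's $n - \|\Delta\tilde z\|_\infty - 13\|\Delta\|_{\rm op}$); and $\|\Delta\tilde z\|_\infty \le \|\Delta z^\star\|_\infty + \sqrt{8}\|\Delta\|_{\rm op}^{3/2}$. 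You also correctly identify that the $\|\Delta\|_{\rm op}^{3/2}$ term is what forces the exponent down from $3/4$ to $2/3$. You stop short of verifying that the explicit constant $1/32768$ works, which the paper does finish, but that is a routine numerical check given the structure you have laid out; there is no gap in the argument itself.
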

\begin{proof}
  The proof is essentially the same as that of Proposition~\ref{prop:err-bd}.  To save space, let us just highlight the key steps.  Similar to~\eqref{ineq:pf-5}, we have
  \begin{equation} \label{eq:2crit-0}
    \rho(z) = \|\Sigma(z)z\|_2 \geq \|\Sigma(\tilde{z})z\|_2 - \|(\Sigma(z)-\Sigma(\tilde{z}))z\|_2.
  \end{equation}
  Define
  $$ \tilde{\theta}=\arg\min_{\theta\in[0,2\pi)} \|z-e^{i\theta}\tilde{z}\|_2, \quad \tilde{\theta}^\star = \arg\min_{\theta\in[0,2\pi)} \|\tilde{z}-e^{i\theta}z^\star\|_2. $$
  Then, since $d_2(\tilde{z},z^\star) =  \|z^\star-e^{-i\tilde{\theta}^\star}\tilde{z}\|_2 \le \sqrt{8\|\Delta\|_{\rm op}}$ by Fact~\ref{lem:fix-pt}, we have
  \begin{eqnarray}
    & & \|(\Sigma(z)-\Sigma(\tilde{z}))z\|_2 \nonumber \\
    \noalign{\smallskip}
    &\le& \sqrt{n} \cdot | (z^\star)^H(e^{-i\tilde{\theta}}z-\tilde{z}) | + \left( \|\Delta\|_{\rm op} + \frac{n}{\alpha} \right) d_2(z,\tilde{z}) \nonumber \\
    \noalign{\smallskip}
    &\le& \sqrt{n}\left( \|z^\star-e^{-i\tilde{\theta}^\star}\tilde{z}\|_2\cdot\|e^{-i\tilde{\theta}}z-\tilde{z}\|_2 + \left| |\tilde{z}^Hz| - n \right| \right) \nonumber \\
    \noalign{\smallskip}
    && \,\,+\,\, \left( \|\Delta\|_{\rm op} + \frac{n}{\alpha} \right) d_2(z,\tilde{z}) \nonumber \\
    \noalign{\smallskip}
    &\le& \left( \sqrt{8n\|\Delta\|_{\rm op}} + \|\Delta\|_{\rm op} + \frac{n}{\alpha} \right) d_2(z,\tilde{z}) + \frac{\sqrt{n}}{2}d_2(z,\tilde{z})^2. \label{eq:2crit-1}
  \end{eqnarray}
  
  Now, let $\tilde{u}=\left( I-\tfrac{1}{n}\tilde{z}\tilde{z}^H \right)(e^{-i\tilde{\theta}}z-\tilde{z})$ be the projection of $e^{-i\tilde{\theta}}z-\tilde{z}$ onto the orthogonal complement of $\mbox{span}(\tilde{z})$.  Then, similar to the derivation of~\eqref{ineq:pf-3}, we have
  \begin{eqnarray}
  & & \|\Sigma(\tilde{z})z\|_2 \,\,\,=\,\,\, \|\Sigma(\tilde{z})\tilde{u}\|_2 \nonumber \\
  \noalign{\smallskip}
  &\ge& \left(|(z^\star)^H\tilde{z}| - \|\Delta\tilde{z}\|_\infty \right) \|\tilde{u}\|_2^2 - \left| \tilde{u}^H(z^\star-e^{-i\tilde{\theta}^\star}\tilde{z}) \right|^2 - \|\Delta\|_{\rm op}\|\tilde{u}\|_2^2 \nonumber \\
  \noalign{\smallskip}
  &\ge& \left( n - \|\Delta\tilde{z}\|_\infty  - 13\|\Delta\|_{\rm op} \right) \left( d_2(z,\tilde{z}) - \frac{1}{2\sqrt{n}}d_2(z,\tilde{z})^2 \right). \label{eq:2crit-2}
  \end{eqnarray}
  Moreover, following the derivations of~\eqref{ineq:pf-11} and~\eqref{eq:infty-bd}, we have
  \begin{eqnarray}
    d_2(z,\tilde{z}) &\le& \frac{\sqrt{n}}{2} + \sqrt{8\|\Delta\|_{\rm op}}, \label{eq:2crit-3} \\
    \noalign{\smallskip}
    \|\Delta\tilde{z}\|_\infty &\le& \|\Delta z^\star\|_\infty + \sqrt{8}\|\Delta\|_{\rm op}^{3/2}.  \label{eq:2crit-4}
  \end{eqnarray}
  Upon putting together~\eqref{eq:2crit-0}--\eqref{eq:2crit-4}, we obtain
  \begin{eqnarray*}
    \rho(z) &\ge& \left[ \frac{n}{4} - \frac{3\|\Delta z^\star\|_\infty}{4} - 4\sqrt{2n\|\Delta\|_{\rm op}} - \frac{43\|\Delta\|_{\rm op}}{4} - \frac{3\sqrt{2}\|\Delta\|_{\rm op}^{3/2}}{2} \right] d_2(z,\tilde{z}) \\
    \noalign{\smallskip}
    &\ge& \frac{n}{8}d_2(z,\tilde{z})
  \end{eqnarray*}
  whenever $\|\Delta\|_{\rm op}\le\tfrac{n^{2/3}}{32768}$, $\|\Delta z^\star\|_\infty\le\tfrac{n}{24}$, and $\alpha\ge4$.  This completes the proof.
\end{proof}


Recall that a global maximizer of Problem~\eqref{opt:QP} is a second-order critical point.  Now, under the assumptions of Proposition~\ref{prop:EB-2crit}, we know that every second-order critical point $\tilde{z}$ of Problem~\eqref{opt:QP} satisfies $d_2(\tilde{z},z^\star) \le \sqrt{8\|\Delta\|_{\rm op}} \le \tfrac{\sqrt{n}}{2}$; see Fact~\ref{lem:fix-pt}. Thus, Proposition~\ref{prop:EB-2crit} shows that every second-order critical point of Problem~\eqref{opt:QP} is also a global maximizer, which is unique up to a global phase.  This gives an alternative proof of~\cite[Theorem 4]{boumal2016nonconvex} with a less restrictive requirement on $\|\Delta z^\star\|_\infty$ ($\|\Delta z^\star\|_\infty = O(n)$ in Proposition~\ref{prop:EB-2crit} vs. $\|\Delta z^\star\|_\infty = O(n^{2/3}\sqrt{\log n})$ in~\cite[Theorem 4]{boumal2016nonconvex}).  It remains an open question to determine whether the conclusion of Proposition~\ref{prop:EB-2crit} still holds under the same noise requirement as Proposition~\ref{prop:err-bd}.

Now, let us proceed to the second part of the proof of Theorem~\ref{thm:convergence-rate}.  Our goal is to prove the following proposition, which elucidates the key properties of Algorithm~\ref{alg:GP}:
\begin{proposition} \label{prop:alg-prop}
  Under the assumptions of Theorem~\ref{thm:convergence-rate}, the sequence of iterates $\{z^k\}_{k\ge0}$ generated by Algorithm~\ref{alg:GP} satisfies the following for $k=0,1,\ldots$, where $a_0,a_1,a_2>0$ are quantities that depend only on $n$ and $\alpha$, and $\hat{z}$ is any global maximizer of Problem~\eqref{opt:QP}:
  \begin{enumerate}
  \item[(a)] \emph{(Sufficient Ascent)} $f(z^{k+1})-f(z^k) \ge a_0\cdot\|z^{k+1}-z^k\|_2^2$.

  \item[(b)] \emph{(Cost-to-Go Estimate)} $f(\hat{z}) - f(z^k) \le a_1 \cdot d_2(z^k,\hat{z})^2$.

  \item[(c)] \emph{(Safeguard)} $\rho(z^k) \le a_2\cdot \|z^{k+1}-z^k\|_2$.    
  \end{enumerate}
\end{proposition}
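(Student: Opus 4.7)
The starting point is that, because $\|z\|_2^2 = n$ on $\T^n$, the objective $\tilde{f}(z) := z^H\tilde{C}z$ (with $\tilde{C} = C + (n/\alpha)I$) equals $f(z) + n^2/\alpha$, and the GPM update rewrites as $z^{k+1}_j = (\tilde{C}z^k)_j/|(\tilde{C}z^k)_j|$. Throughout, I would write $\delta^k = z^{k+1} - z^k$ and $c_j = \overline{z^k_j}(\tilde{C}z^k)_j$; since $|z^k_j|=1$ we have $|c_j| = |(\tilde{C}z^k)_j|$ and $\overline{z^{k+1}_j}(\tilde{C}z^k)_j = |(\tilde{C}z^k)_j|$, and the same short computation appearing in the proof of Proposition~\ref{prop:err-bd} gives the pointwise identity
$$|c_j| - \Re c_j = \tfrac{1}{2}\,|(\tilde{C}z^k)_j|\,|\delta^k_j|^2.$$

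For (a), expanding and summing the identity above,
$$f(z^{k+1}) - f(z^k) = 2\Re[(\delta^k)^H\tilde{C}z^k] + (\delta^k)^H\tilde{C}\delta^k = \sum_{j=1}^n |(\tilde{C}z^k)_j|\,|\delta^k_j|^2 + (\delta^k)^H\tilde{C}\delta^k.$$
Dropping the (nonnegative) first term and bounding $(\delta^k)^H\tilde{C}\delta^k \geq \lambda_{\min}(\tilde{C})\|\delta^k\|_2^2 \geq (n/\alpha - \|\Delta\|_{\rm op})\|\delta^k\|_2^2$ yields $a_0 = n/\alpha - \|\Delta\|_{\rm op}$, which is strictly positive by the stepsize condition $\alpha < n/\|\Delta\|_{\rm op}$ in Theorem~\ref{thm:convergence-rate}. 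For (c), the relation $(\tilde{C}z^k)_j = |(\tilde{C}z^k)_j|z^{k+1}_j$ immediately gives $(\Sigma(z^k)z^k)_j = -|(\tilde{C}z^k)_j|\delta^k_j$, hence $\rho(z^k) \leq \|\tilde{C}z^k\|_\infty\,\|\delta^k\|_2$; the crude bound $\|\tilde{C}z^k\|_\infty \leq n + n/\alpha + \|\Delta z^\star\|_\infty + \|\Delta\|_{\rm op}\,d_2(z^k,z^\star) = O(n)$ follows from the hypotheses of Theorem~\ref{thm:convergence-rate} together with $d_2(z^k,z^\star) \leq \sqrt{n}/2$, which was established in the proof of Theorem~\ref{thm:2-norm}.

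For (b), let $\hat{\theta}$ achieve $d_2(z^k,\hat{z})$ and set $\hat{w} = e^{i\hat{\theta}}\hat{z}$, $\eta = \hat{w} - z^k$. Because $\hat{z}$ is a second-order critical point of Problem~\eqref{opt:QP}, Fact~\ref{lem:fix-pt} gives $C\hat{z} = \mathrm{Diag}(|C\hat{z}|)\hat{z}$, so $C\hat{w} = \mathrm{Diag}(|C\hat{z}|)\hat{w}$ and $f(\hat{z}) = \hat{w}^HC\hat{w} = \|C\hat{z}\|_1$. Expanding $f(\hat{z}) - f(z^k) = 2\Re[(z^k)^HC\eta] + \eta^HC\eta$ and reusing the same pointwise identity (now with $c'_j = \overline{z^k_j}(C\hat{w})_j$, for which $|c'_j|=|(C\hat{z})_j|$) to simplify $\Re[(z^k)^HC\hat{w}]$, the cross term telescopes and one is left with
$$f(\hat{z}) - f(z^k) = \sum_{j=1}^n |(C\hat{z})_j|\,|\eta_j|^2 - \eta^HC\eta \leq \bigl(\|C\hat{z}\|_\infty + \|\Delta\|_{\rm op}\bigr)\,d_2(z^k,\hat{z})^2.$$
Combining $\|C\hat{z}\|_\infty \leq n + \|\Delta z^\star\|_\infty + \|\Delta\|_{\rm op}\,d_2(\hat{z},z^\star)$ with Fact~\ref{lem:1} and the noise assumptions then gives $a_1 = O(n)$.

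The main technical subtlety is recognizing that the second-order critical point identity of Fact~\ref{lem:fix-pt} is exactly what makes part (b) telescope through the same $|c_j| - \Re c_j$ calculation used in parts (a) and (c); once this common thread is spotted, all three bounds reduce to tracking constants against the standing noise hypotheses $\|\Delta\|_{\rm op} \leq n^{3/4}/312$ and $\|\Delta z^\star\|_\infty \leq n/24$, and the explicit dependences of $a_0, a_1, a_2$ on $n$ and $\alpha$ can be read off with no further work.
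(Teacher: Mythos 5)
Your proposal is correct and, while it reaches the same conclusions, part (b) takes a genuinely different route from the paper worth noting. For (a), the paper instead expands $f(z^{k+1})-f(z^k) = (z^{k+1}-z^k)^H\tilde{C}(z^{k+1}-z^k) - 2(z^k)^H\tilde{C}z^k + 2\Re\{(z^{k+1})^H\tilde{C}z^k\}$ and drops the nonnegative quantity $\Re\{(z^{k+1})^H\tilde{C}z^k\} - (z^k)^H\tilde{C}z^k$ using the fact that $z^{k+1}$ maximizes $\Re\{z^H\tilde{C}z^k\}$ over $\T^n$ and that $(z^{k+1})^H\tilde{C}z^k = \|\tilde{C}z^k\|_1$ is real; your explicit pointwise identity $\Re[\overline{\delta^k_j}(\tilde{C}z^k)_j] = \tfrac{1}{2}|(\tilde{C}z^k)_j||\delta^k_j|^2$ yields the same nonnegative term in sum form and arrives at the same $a_0$. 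For (b), the paper works at the matrix level: $f(\hat{z})-f(z^k) = \|\tilde{C}\hat{z}\|_1 - (z^k)^H\tilde{C}z^k = (z^k)^H\Sigma(\hat{z})z^k$, then uses $\Sigma(\hat{z})\hat{z}=\bz$ to shift to the error vector and bounds by $\|\Sigma(\hat{z})\|_{\rm op} \leq \|\tilde{C}\|_{\rm op}+\|\tilde{C}\hat{z}\|_\infty < 3n$; your component-wise telescope $f(\hat{z})-f(z^k) = \sum_j |(C\hat{z})_j||\eta_j|^2 - \eta^HC\eta$ is a valid alternative and actually gives the tighter constant $a_1 = \|C\hat{z}\|_\infty + \|\Delta\|_{\rm op}$. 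Part (c) matches the paper.

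One small inaccuracy: in (c) you assert $\|\tilde{C}z^k\|_\infty = O(n)$. Under the standing hypotheses $\|\Delta\|_{\rm op} \leq n^{3/4}/312$ and $d_2(z^k,z^\star) \leq \sqrt{n}/2$, the term $\|\Delta\|_{\rm op}\,d_2(z^k,z^\star)$ can be as large as $n^{5/4}/624$, so the correct order is $O(n^{5/4})$ (the paper's bound is $< \tfrac{3}{2}n^{5/4}$). This does not affect the validity of the proposition — it only requires some finite $a_2 > 0$ — but it does matter if you later want to read off an explicit contraction factor $\lambda$. Relatedly, your $a_0, a_1, a_2$ as written depend on $\Delta$, not literally on $n$ and $\alpha$ alone; to match the proposition's phrasing one should take one more step of bounding via the noise hypotheses (the paper does the same, so this is a shared looseness rather than a gap specific to your argument).
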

\begin{proof}
  We begin by proving (a).  Recalling that $\tilde{C}=C+\tfrac{n}{\alpha}I$, we have
  \begin{eqnarray*}
    & & f(z^{k+1})-f(z^k) \\
    \noalign{\smallskip}
    &=& (z^{k+1}-z^k)^H\tilde{C}(z^{k+1}-z^k) - 2(z^k)^H\tilde{C}(z^k) + 2\Re\{ (z^{k+1})^H\tilde{C}(z^k) \}.
  \end{eqnarray*}
  We claim that $\Re\{ (z^{k+1})^H\tilde{C}(z^k) \} \geq (z^k)^H \tilde{C} (z^k)$.  This follows from the fact that
  $$ (z^{k+1})^H\tilde{C}(z^k) = \left( \frac{\tilde{C}z^k}{|\tilde{C}z^k|} \right)^H\tilde{C}(z^k) $$
  is a real number and
  $$ z^{k+1}\in \arg\max_{z\in \mathbb{T}^n} \Re\{z^H\tilde{C}z^k\}. $$
  Hence, by the assumption on $\alpha$, we have
  $$ f(z^{k+1})-f(z^k) \ge (z^{k+1}-z^k)^H\tilde{C}(z^{k+1}-z^k) \ge a_0 \cdot \|z^{k+1}-z^k\|_2^2 $$
  with $a_0=\lambda_{\rm min}\left(\Delta+\frac{n}{\alpha}I\right)>0$.
  
  Next, we prove (b).  Let $\hat{\theta}_k=\arg\min_{\theta\in[0,2\pi)} \|z^k-e^{i\theta}\hat{z}\|_2$.  Then, we have
  \begin{eqnarray}
    f(\hat{z}) - f(z^k) &=& \hat{z}^H\tilde{C}\hat{z} - (z^k)^H\tilde{C}(z^k) \nonumber \\
    \noalign{\smallskip}
    &=& \|\tilde{C}\hat{z}\|_1 - (z^k)^H\tilde{C}(z^k) \label{eq:ctg-1} \\
    \noalign{\smallskip}
    &=& (z^k)^H \left( {\rm Diag}(|\tilde{C}\hat{z}|) - \tilde{C} \right) (z^k) \nonumber \\
    \noalign{\smallskip}
    &=& (e^{-i\hat{\theta}_k}z^k-\hat{z})^H \left( {\rm Diag}(|\tilde{C}\hat{z}|) - \tilde{C} \right) (e^{-i\hat{\theta}_k}z^k-\hat{z}) \label{eq:ctg-2} \\
    \noalign{\smallskip}
    &\le& \left( \|\tilde{C}\|_{\rm op} + \|\tilde{C}\hat{z}\|_\infty \right) d_2(z^k,\hat{z})^2, \nonumber
  \end{eqnarray}
  where both~\eqref{eq:ctg-1} and~\eqref{eq:ctg-2} follow from Fact~\ref{lem:fix-pt}.  Now, observe that
  \begin{eqnarray}
    & & \|\tilde{C}\|_{\rm op} + \|\tilde{C}\hat{z}\|_\infty \nonumber \\
    \noalign{\smallskip}
    &\le& \|C\|_{\rm op}+\|C\hat{z}\|_\infty + \frac{2n}{\alpha} \nonumber \\
    \noalign{\smallskip}
    &\le& \|(z^\star)(z^\star)^H\|_{\rm op} + \|\Delta\|_{\rm op} + \|(z^\star)(z^\star)^H\hat{z}\|_\infty + \|\Delta\hat{z}\|_\infty + \frac{2n}{\alpha} \nonumber \\
    \noalign{\smallskip}
    &\le& n+\|\Delta\|_{\rm op} + |(z^\star)^H\hat{z}| + \|\Delta z^\star\|_\infty + \frac{4\|\Delta\|_{\rm op}^2}{\sqrt{n}} + \frac{2n}{\alpha} \label{eq:ctg-3} \\
    \noalign{\smallskip}
    &\le& 2n + \|\Delta\|_{\rm op} + \|\Delta z^\star\|_\infty + \frac{4\|\Delta\|_{\rm op}^2}{\sqrt{n}} + \frac{2n}{\alpha} \label{eq:ctg-4} \\
    \noalign{\smallskip}
    &<& 3n, \label{eq:ctg-5} 
  \end{eqnarray}
  where~\eqref{eq:ctg-3} follows from~\eqref{eq:infty-bd} and the fact that $\|(z^\star)(z^\star)^H\|_{\rm op}=n$,~\eqref{eq:ctg-4} follows from~\eqref{eq:star-hat} and Fact~\ref{lem:1}, and~\eqref{eq:ctg-5} is due to the assumptions on $\alpha$, $\|\Delta\|_{\rm op}$, and $\|\Delta z^\star\|_\infty$.  Hence, we conclude that
  $$ f(\hat{z})-f(z^k)\le a_1\cdot d_2(z^k,\hat{z})^2 $$
  for some $a_1\in(0,3n)$.

  Lastly, we prove (c).  By definition of $z^{k+1}$, we have
  $$ {\rm Diag}(|\tilde{C}z^k|)(z^{k+1}-z^k) = \left( \tilde{C}-{\rm Diag}(|\tilde{C}z^k|) \right)z^k. $$
  It follows that
  $$ \rho(z^k) = \left\| {\rm Diag}(|\tilde{C}z^k|)(z^{k+1}-z^k) \right\|_2 \le \|\tilde{C}z^k\|_\infty\|z^{k+1}-z^k\|_2. $$
  Now, recall from the proof of Theorem \ref{thm:2-norm} that $d_2(z^k,z^\star)\leq \tfrac{\sqrt{n}}{2}$ for $k=0,1,\ldots$.  Upon letting $\theta_k^\star=\arg\min_{\theta\in[0,2\pi)} \|z^k-e^{i\theta}z^\star\|_2$, we obtain
  \begin{eqnarray*}
    \|\tilde{C}z^k\|_\infty &\le& \|(z^\star)(z^\star)^Hz^k\|_\infty + \|\Delta z^k\|_\infty + \frac{n}{\alpha} \\
    \noalign{\smallskip}
    &\le& |(z^\star)^Hz^k| + \|\Delta(e^{-i\theta_k^\star}z^k-z^\star)\|_\infty + \|\Delta z^\star\|_\infty + \frac{n}{\alpha} \\
    \noalign{\smallskip}
    &\le& n + \|\Delta\|_{\rm op}\cdot d_2(z^k,z^\star) + \|\Delta z^\star\|_\infty + \frac{n}{\alpha} \\
    \noalign{\smallskip}
    &<& \frac{3n^{5/4}}{2},
  \end{eqnarray*}
  where the last inequality is due to the assumptions on $\alpha$, $\|\Delta\|_{\rm op}$, and $\|\Delta z^\star\|_\infty$.  It follows that
  $$ \rho(z^k) \le a_2 \cdot \|z^{k+1}-z^k\|_2  $$
  for some $a_2\in\left( 0,\tfrac{3n^{5/4}}{2} \right)$.
\end{proof}

Armed with Propositions~\ref{prop:err-bd} and~\ref{prop:alg-prop}, we are now ready to prove Theorem~\ref{thm:convergence-rate}.
\begin{proof} [Proof of Theorem~\ref{thm:convergence-rate}]
  By Propositions~\ref{prop:err-bd} and~\ref{prop:alg-prop}, we have
  \begin{eqnarray*}
    f(\hat{z}) - f(z^{k+1}) &=& \left( f(\hat{z}) - f(z^k) \right) - \left( f(z^{k+1}) - f(z^k) \right) \nonumber \\
    \noalign{\smallskip}
    &\le& a_1\cdot d_2(z^k,\hat{z})^2 - \left( f(z^{k+1}) - f(z^k) \right) \nonumber \\
    \noalign{\smallskip}
    &\le& \frac{64a_1}{n^2} \rho(z^k)^2 - \left( f(z^{k+1}) - f(z^k) \right) \nonumber \\
    \noalign{\smallskip}
    &\le& \frac{64a_1a_2^2}{n^2}\|z^{k+1}-z^k\|_2^2 - \left( f(z^{k+1}) - f(z^k) \right) \nonumber \\
    \noalign{\smallskip}
    &\le& \left( \frac{64a_1a_2^2}{a_0n^2} - 1 \right) \left( f(z^{k+1}) - f(\hat{z}) + f(\hat{z}) - f(z^k) \right).
  \end{eqnarray*}
  Since $f(\hat{z})\ge f(z^k)$ for $k=0,1,\ldots$, we may assume without loss of generality that $a'=\tfrac{64a_1a_2^2}{a_0n^2}>1$.  It then follows that
  $$ f(\hat{z})-f(z^{k+1}) \le \frac{a'-1}{a'}\left( f(\hat{z}) - f(z^k) \right), $$
  which yields
  $$
    f(\hat{z})-f(z^k) \le \left( f(\hat{z})-f(z^0) \right) \lambda^k
  $$
  with $\lambda=\tfrac{a'-1}{a'}\in(0,1)$.  Furthermore, we have
  \begin{eqnarray*}
    d_2(z^k,\hat{z})^2 &\le& \frac{64}{n^2}\rho(z^k)^2 \\
    \noalign{\smallskip}
    &\le& \frac{64a_2^2}{n^2} \|z^{k+1}-z^k\|_2^2 \\
    \noalign{\smallskip}
    &\le& \frac{64a_2^2}{a_0n^2}\left( f(z^{k+1})-f(z^k) \right) \\
    \noalign{\smallskip}
    &\le& \frac{64a_2^2}{a_0n^2}\left( f(\hat{z})-f(z^k) \right) \\
    \noalign{\smallskip}
    &\le& \frac{64a_2^2}{a_0n^2} \left( f(\hat{z})-f(z^0) \right) \lambda^k,
  \end{eqnarray*}
  which implies that
  $$ d_2(z^k,\hat{z}) \le a \left( f(\hat{z})-f(z^0) \right)^{1/2} \lambda^{k/2} $$
  with $a=\sqrt{\tfrac{64a_2^2}{a_0n^2}}$. This completes the proof.
\end{proof}

Again, we can specialize Theorem~\ref{thm:convergence-rate} to the Gaussian noise setting.  This leads to the following corollary, which can be proven by combining Theorem~\ref{thm:convergence-rate} with the probabilistic estimates in~\cite[Proposition 3.3]{BBS16}; cf.~Corollary~\ref{cor:1}:
\begin{corollary} \label{cor:global-conv}
Suppose that the measurement noise $\Delta$ takes the form $\Delta=\sigma W$, where $\sigma^2>0$ is the noise power satisfying $\sigma\in\left(0,\tfrac{n^{1/4}}{936}\right]$ and $W\in\mathbb{H}^n$ is a Wigner matrix.  Suppose further that the step size $\alpha$ satisfies $\alpha\in\left[ 4,312n^{1/4} \right)$ and the initial point $z^0$ is given by $z^0=v_C$.  Then, with probability at least $1-2n^{-5/4}-2e^{-n/2}$, the sequence of iterates $\{z^k\}_{k\ge0}$ generated by Algorithm~\ref{alg:GP} satisfies
  \begin{eqnarray*}
    f(\hat{z}) - f(z^k) &\le& \left( f(\hat{z})-f(z^0) \right) \lambda^k, \\
    \noalign{\smallskip}
    d_2(z^k,\hat{z}) &\leq& a \left( f(\hat{z})-f(z^0) \right)^{1/2} \lambda^{k/2}
  \end{eqnarray*}
  for $k=0,1,\ldots$, where $a>0,\lambda\in(0,1)$ are quantities that depend only on $n$ and $\alpha$, and $\hat{z}$ is any global maximizer of Problem~\eqref{opt:QP}.
\end{corollary}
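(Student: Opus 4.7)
The plan is to simply pipe the Gaussian-noise tail estimates into the deterministic statement of Theorem~\ref{thm:convergence-rate}, exactly as was done for Corollary~\ref{cor:1}. Since Theorem~\ref{thm:convergence-rate} is stated purely in terms of the quantities $\|\Delta\|_{\rm op}$ and $\|\Delta z^\star\|_\infty$, the only real work is to translate the assumed bound $\sigma\le n^{1/4}/936$ into the two deterministic hypotheses of that theorem on a favorable event.

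First I would invoke~\cite[Proposition 3.3]{BBS16}, which provides the event $\mathcal{E}$ on which $\|W\|_{\rm op}\le 3\sqrt{n}$ and $\|Wz^\star\|_\infty\le 3\sqrt{n\log n}$, and guarantees $\mathbb{P}(\mathcal{E})\ge 1-2n^{-5/4}-2e^{-n/2}$. Substituting $\Delta=\sigma W$, the two bounds become $\|\Delta\|_{\rm op}\le 3\sigma\sqrt{n}$ and $\|\Delta z^\star\|_\infty\le 3\sigma\sqrt{n\log n}$, both valid on $\mathcal{E}$.

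Next I would verify the three assumptions of Theorem~\ref{thm:convergence-rate} on $\mathcal{E}$. For the operator norm, $\sigma\le n^{1/4}/936$ gives $\|\Delta\|_{\rm op}\le 3\sigma\sqrt{n}\le n^{3/4}/312$, as required. For the infinity norm, the same bound on $\sigma$ yields $\|\Delta z^\star\|_\infty\le 3\sigma\sqrt{n\log n}\le (\sqrt{\log n}/312)\,n^{3/4}$, which is at most $n/24$ since $\sqrt{\log n}\le 13\,n^{1/4}$ for all $n\ge 1$ (a trivial check). For the step size, the bound $\|\Delta\|_{\rm op}\le n^{3/4}/312$ gives $n/\|\Delta\|_{\rm op}\ge 312\,n^{1/4}$, so the assumed range $\alpha\in[4,312\,n^{1/4})$ is contained in $[4,n/\|\Delta\|_{\rm op})$. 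Initialization with $z^0=v_C$ is already part of the hypothesis, so the third assumption is free.

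Having checked all three hypotheses on $\mathcal{E}$, I would conclude by applying Theorem~\ref{thm:convergence-rate} directly on $\mathcal{E}$ to obtain the two displayed inequalities, together with the existence of the constants $a>0$ and $\lambda\in(0,1)$ depending only on $n$ and $\alpha$. Since $\mathbb{P}(\mathcal{E})\ge 1-2n^{-5/4}-2e^{-n/2}$, the corollary follows. I do not anticipate any serious obstacle here; the only mildly delicate point is the verification of $\|\Delta z^\star\|_\infty\le n/24$, where one must absorb the $\sqrt{\log n}$ factor into the polynomial slack provided by the assumption $\sigma=O(n^{1/4})$, but this is routine.
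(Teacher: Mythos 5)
Your proposal is correct and follows the same route the paper intends: plug the probabilistic bounds from \cite[Proposition 3.3]{BBS16} into the deterministic hypotheses of Theorem~\ref{thm:convergence-rate}, exactly as Corollary~\ref{cor:1} plugs them into Theorems~\ref{thm:2-norm} and~\ref{thm:infty-norm}. Your numerical checks (including absorbing the $\sqrt{\log n}$ factor via $\sqrt{\log n}\le 13n^{1/4}$) are all valid.
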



Corollary~\ref{cor:global-conv} shows that in the Gaussian noise setting, Algorithm~\ref{alg:GP} will converge to a global maximizer of Problem~\eqref{opt:QP} at a linear rate with high probability for noise level up to $\sigma=O(n^{1/4})$.  This matches the noise level requirement for the tightness of the SDR-based method established in~\cite[Theorem 2.1]{BBS16}.  As the GPM typically has lower complexity than the SDR-based method in tackling Problem~\eqref{opt:QP}, we see that the former is competitive with the latter in terms of both theoretical guarantees and numerical efficiency.

\section{Conclusion}
In this paper, we conducted a comprehensive analysis of the estimation and convergence performance of the GPM for tackling the phase synchronization problem.  First, under the assumption that the measurement noise $\Delta$ satisfies $\|\Delta\|_{\rm op}=O(n)$, we established bounds on the rates of decrease in the $\ell_2$- and $\ell_\infty$-estimation errors of the iterates generated by the GPM.  As a corollary, we showed that in the Gaussian noise setting (i.e., $\Delta=\sigma W$, where $\sigma>0$ is the noise level and $W$ is a Wigner matrix), the expected squared $\ell_2$-estimation errors of the iterates are decreasing and all are on the same order as that of the MLE even when the noise level is $\sigma=O(n^{1/2})$.  The above result holds regardless of whether the iterates converge or not and yields the best provable bound on the estimation error of any accumulation point generated by the GPM under the least restrictive noise requirement currently known.  Second, we showed that when the measurement noise $\Delta$ and target phase vector $z^\star$ satisfy $\|\Delta\|_{\rm op}=O(n^{3/4})$ and $\|\Delta z^\star\|_\infty=O(n)$, the GPM will converge linearly to a global maximizer of Problem~\eqref{opt:QP}.  This not only resolves an open question in~\cite{boumal2016nonconvex} concerning the convergence \emph{rate} of the GPM but also improves upon the noise requirement $\|\Delta\|_{\rm op}=O(n^{2/3})$ and $\|\Delta z^*\|_\infty=O(n^{2/3}\sqrt{\log n})$ that is imposed in~\cite{boumal2016nonconvex} to establish just the convergence of the GPM.  Our result implies that in the Gaussian noise setting, the GPM will converge linearly to a global maximizer of Problem~\eqref{opt:QP} in the noise regime $\sigma=O(n^{1/4})$.  This is the same regime for which the computationally heavier SDR-based method in~\cite{BBS16} is provably tight.  To establish our convergence rate result, we developed a new error bound for the non--convex problem~\eqref{opt:QP}.  As a by-product, we showed that every second-order critical point of Problem~\eqref{opt:QP} is globally optimal if $\|\Delta\|_{\rm op}=O(n^{2/3})$ and $\|\Delta z^*\|_\infty=O(n)$.  This slightly improves upon the corresponding result in~\cite{boumal2016nonconvex}.  An interesting future direction would be to extend the GPM and the machinery developed in this paper to design and analyze first-order methods for other (non-convex) quadratic optimization problems.


\section*{Acknowledgement}
We thank Nicolas Boumal for his helpful comments on an earlier version of our manuscript.

\bibliographystyle{plain}
\bibliography{as}

\end{document}